\newtheorem{theorem}{Theorem}[section]
\newtheorem{lemma}[theorem]{Lemma}
\newtheorem{proposition}[theorem]{Proposition}
\newtheorem{corollary}[theorem]{Corollary}
\theoremstyle{definition}
\newtheorem{definition}[theorem]{Definition}
\newtheorem{example}[theorem]{Example}
\newtheorem{remark}[theorem]{Remark}
\DeclareMathOperator{\diam}{diam}
\title{Multiplicative Invariance for a Class of Subsets of the Complex Plane}
\author{Neil MacVicar}
\begin{document} 
  
\maketitle

\begin{abstract} 
Multiplicative invariance is a well-studied property of subsets of the unit interval. The theory in the complex plane is less developed. This paper introduces an analogous definition for multiplicative invariance in the complex plane coinciding with a more general definition concerning subsets of attractors of iterated function systems satisfying the strong separation condition. We establish similar results to those of Furstenberg's in the unit interval. Namely, that the Hausdorff and box-counting dimensions of a multiplicatively invariant set are equal and, furthermore, are equal to the normalized topological entropy of an underlying subshift. We also extend results concerning the box-counting dimension of intersections of base-$b$ restricted digit sets with their translates where $b$ is a suitably chosen Gaussian integer. 
\end{abstract}


\section*{\textbf{Introduction}} 

Throughout his career, Furstenberg made contributions to many areas of mathematics using dynamical methods. Among those contributions is a pair of papers at the intersection of dynamics and fractal geometry (\cite{F67}, \cite{F70}). Therein, Furstenberg proved results and made conjectures about the fractal properties of multiplicatively invariant subsets of the unit interval. Multiplicatively invariant subsets are those that are invariant under the map $x \mapsto rx \mod 1$ where $r$ is some positive integer. For a specific value $r$, this is called $\times r$-invariance. The following theorem highlights particular results of Furstenberg which are recalled in Section~\ref{section:realCase} of this paper. 
\begin{theorem}\label{thm:foundation}[H. Furstenberg, \cite{F67}, proposition III.1]
Let $r\geq2$ be an integer. Let $\mathcal{E}$ denote topological entropy, let $\dim_{H}$ denote Hausdorff dimension, and let $\dim_{B}$ denote box-counting dimension. If $A \subset \{0,1,\ldots,r-1\}^{\mathbb{N}}$ is a subshift, then 
\begin{itemize}
\item[(i)]$\tilde{A} = \{\sum_{k=1}^{\infty}a_{k}r^{-k}:(a_{k})_{k\geq1}\in A\}$ is $\times r$-invariant, and
\item[(ii)]$\dim_{B}\tilde{A} = \frac{\mathcal{E}(A)}{\log{r}}.$
\item[(iii)] If $Y$ is a $\times r$-invariant set, then $\dim_{H}Y = \dim_{B}Y.$
\end{itemize}
\end{theorem}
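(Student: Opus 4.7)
The plan is to handle the three claims in sequence, with the first two being essentially combinatorial and the third carrying the substance. For (i), I would simply observe that if $x = \sum_{k\geq 1} a_k r^{-k}$ with $(a_k) \in A$, then $rx \bmod 1 = \sum_{k\geq 1} a_{k+1} r^{-k}$, and the sequence $(a_{k+1})_{k\geq 1}$ lies in $A$ by shift-invariance, so $\tilde{A}$ is preserved by $x \mapsto rx \bmod 1$.

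For (ii), let $A_n \subseteq \{0,\ldots,r-1\}^n$ denote the set of words appearing as length-$n$ prefixes of sequences in $A$, so that $\mathcal{E}(A) = \lim_n \tfrac{1}{n} \log |A_n|$ by subadditivity. Each $w \in A_n$ corresponds to a cylinder interval $I_w \subset [0,1]$ of length $r^{-n}$, and $\tilde{A} \subset \bigcup_{w \in A_n} I_w$ gives $N_{r^{-n}}(\tilde{A}) \leq |A_n|$. For the reverse bound, each $I_w$ contains a point of $\tilde{A}$ (the image of the sequence in $A$ with prefix $w$) and the $I_w$ have pairwise disjoint interiors, so any ball of radius $r^{-n}$ meets at most three of them, yielding $N_{r^{-n}}(\tilde{A}) \geq |A_n|/3$. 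Inserting these estimates into the definition of $\dim_B$ and letting $n \to \infty$ gives $\dim_B \tilde{A} = \mathcal{E}(A)/\log r$.

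For (iii), the first move is to realize $X$ as $\tilde{A}$ for a subshift: assuming $X$ is closed (as is standard in the definition of $\times r$-invariance), let $A = \{(a_k) : \sum_k a_k r^{-k} \in X\}$; closedness of $X$ forces $A$ closed in the product topology, $\times r$-invariance of $X$ translates to shift-invariance of $A$, and $X = \tilde{A}$. By (ii) and the general inequality $\dim_H \leq \dim_B$, it remains to prove $\dim_H X \geq \mathcal{E}(A)/\log r$. I would obtain this from a measure of maximal entropy: expansivity of the shift combined with upper semicontinuity of $h_\mu$ on the compact space of invariant measures ensures the existence of such a $\mu$, which by ergodic decomposition may be taken ergodic with $h_\mu = \mathcal{E}(A)$. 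The Shannon--McMillan--Breiman theorem then gives $\mu([a]_n) = \exp(-n\mathcal{E}(A) + o(n))$ for $\mu$-a.e.\ $a$; pushing $\mu$ forward to a measure $\nu$ on $X$ converts this into $\nu(B(y, r^{-n})) \leq (r^{-n})^{(\mathcal{E}(A) - \varepsilon)/\log r}$ on a set of full $\nu$-measure. The mass distribution principle (applied after restricting to a set where the bound is uniform, via Egoroff) then yields $\dim_H X \geq (\mathcal{E}(A) - \varepsilon)/\log r$ for every $\varepsilon > 0$.

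The main obstacle is the Hausdorff lower bound in (iii). Its two non-elementary inputs, namely the existence of an ergodic measure of maximal entropy and the conversion of symbolic cylinder estimates into metric ball estimates, are standard for subshifts on finite alphabets equipped with the natural $r$-adic metric, but each of them invokes structural machinery well beyond the direct combinatorics that suffice for (i) and (ii).
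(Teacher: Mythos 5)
Your parts (i) and (ii) are correct and follow the same route the paper takes in its complex analogue (Lemma~\ref{lem:boxtile} and Theorem~\ref{thm:top}): cylinder intervals of generation $n$ give $N_{r^{-n}}(\tilde{A})\leq|\mathcal{L}_{n}(A)|$, and the bounded-overlap argument gives the matching lower bound up to a constant. The reduction of (iii) to the subshift case via $A=g^{-1}(X)$ also matches the paper's remark following Theorem~\ref{thm:hb}. Where you genuinely diverge is the Hausdorff lower bound in (iii). The paper (following Furstenberg, and replicated in Theorem~\ref{thm:dimeq}) argues combinatorially: it first shows Hausdorff dimension can be computed using covers by cylinder intervals only (Lemma~\ref{lem:francescowasright}), reduces by compactness to finite covers, and then shows that any finite cover $\{T_{k}\}$ by cylinders of generations $m_{k}$ with $s<\dim_{B}X$ must satisfy $\sum_{k}r^{-sm_{k}}\geq1$. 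The proof of that inequality is a generating-function contradiction: if the sum were $<1$, the semigroup generated by the corresponding words under concatenation would force $\sum_{\rho\in R}r^{-s\,l(\rho)}$ to converge (using that subwords of admissible words are admissible, by shift-invariance), whereas the entropy count $|\mathcal{L}_{m}(\hat{X})|r^{-sm}>1$ forces it to diverge. Your route instead invokes the variational principle, existence of an ergodic measure of maximal entropy (via expansiveness and upper semicontinuity of $\mu\mapsto h_{\mu}$), Shannon--McMillan--Breiman, and the mass distribution principle, with Egoroff handling the passage from cylinder estimates to uniform ball estimates. This is a valid and standard alternative; the neighboring-cylinder issue when converting $\mu([a]_{n})$ bounds into $\nu(B(y,r^{-n}))$ bounds is real but is exactly what your Egoroff restriction resolves, since any generation-$n$ cylinder meeting the restricted set inherits the uniform bound and any other contributes nothing to the restricted measure. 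The trade-off: your argument imports substantial ergodic-theoretic machinery but produces a Frostman measure and hence generalizes readily to other expanding settings; the paper's argument is elementary and self-contained, using nothing beyond the definition of Hausdorff measure and word counting, which is why it transfers cleanly to the complex base-$b$ setting where the geometry of tiles is more delicate.
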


Considerable development of the theory of multiplicatively invariant subsets of the unit interval has been pursued since: Furstenberg's sumset conjecture, which offers sufficient conditions under which the Hausdorff and box-counting dimensions of sumsets of multiplicative invariant subsets split into the sum of the dimensions of those subsets, was proven by Hochman and Shmerkin in \cite{HS12}. Additionally, Furstenberg's intersection conjecture (now known as the Shmerkin-Wu theorem) was proven independently by Shmerkin in \cite{S19} and Wu in \cite{W19} using different methods and again by Austin in \cite{A22}. 

In \cite{GMR21}, Richter, Moreira, and Glasscock established similar results to those of Furstenberg in \cite{F67} and a sumset result for a version of $\times r$-invariance for subsets of the nonnegative integers. 

The theory in the complex plane is less developed. In \cite{PS21}, Pedersen and Shaw study a complex analogue of a class of multiplicative invariant subsets called base-$r$ restricted digit Cantor sets. A base-$r$ restricted digit Cantor set contains those numbers in the unit interval that, when written in base-$r$, restrict the coefficients used in their expansions to some subset of $\{0, 1,\ldots, r-1\}$. For example, the middle-thirds Cantor set are all numbers in the unit interval that, when written in base $3$, only use the coefficients $0$ and $2$. 

The problem of defining a more general class of sets that might be called ``$\times b$-invariant" where $b$ is a Gaussian integer presents challenges that differ from the real case. The map used to define multiplicative invariance in the unit interval subtracts the integer part to ensure that the image is in the domain. It is not immediately clear what the correct choice is for the integer part of a complex number. Our approach is to first write a Gaussian integer in base $b$. Representing complex numbers with respect to a Gaussian integer base can be traced back to Knuth in \cite{K60} as an alternative way of storing complex numbers and performing complex arithmetic on computers. Canonical number systems with Gaussian integers was explored by Katai and Szabo in \cite{KS74}. More generally, canonical number systems for quadratic fields were both studied by Gilbert in \cite{G81} and Katai and Kov\'{a}cs in \cite{KK81}. The geometry of tiles generated by those systems has also been studied (see, for example, \cite{AT05} by Akiyama and Thuswaldner). 

This paper introduces a definition for invariance concerning subsets of iterated function systems (Definition~\ref{def:contractInvariant}) from which a definition for $\times b$-invariance (Definition~\ref{def:timesb}) is given. Our main result is Theorem \ref{thm:earlymain} (Theorem \ref{thm:main} in Section \ref{section:mainResult}). It is similar to Theorem~\ref{thm:foundation}. Here $C_{D}$ denotes the set of $z = d_{1}b^{-1} + d_{2}b^{-2} +\cdots$ where the coefficients $d_{k}$ are elements of $D\subset\{0, 1,\ldots, |b|^{2}-1\}$ and $b=-n+i$ for some positive integer $n$. 
\begin{theorem}\label{thm:earlymain} 
Let $b=-n+i$ with $n\geq2$ and assume $D\subset\{0, 1,\ldots, |b|^{2}-1\}$ is nonempty and satisfies $|d-d^{'}|\neq 1$ for all $d, d^{'}\in D$. If $A\subset D^{\mathbb{N}}$ is a subshift, then 
\begin{itemize}
\item[(i)]$\tilde{A} = \{\sum_{k=1}^{\infty}a_{k}b^{-k}:(a_{k})_{k\geq1}\in A\}$ is $\times b$-invariant, and
\item[(ii)]$\dim_{B}\tilde{A} = \frac{\mathcal{E}(A)}{\log{|b|}}.$
\item[(iii)] If $Y\subset C_{D}$ is a $\times b$-invariant set, then $\dim_{H}Y = \dim_{B}Y.$
\end{itemize}
\end{theorem}

In addition to this, we extend the application of a formula for the box-counting dimension of the intersection of $C_{D}$ with a translate of itself. This formula was originally presented in \cite{PS21}. These kinds of intersections have been studied for subsets of the real line (see \cite{DH95}, \cite{NL02}, or \cite{PP12}).

\begin{theorem}\label{thm:earlyminimalSep} 
Let $b = -n+i$ with $n\geq2$. Let $D\subset\{0, 1, \ldots, |b|^{2}-1\}$ satisfy $d\leq n^{2}/2$ for all $d\in D$ and $|a-a^{'}|\neq 1$ for all $a, a^{'}\in D-D$. We have
\begin{equation}
\underline{\dim}_{B}(C_{D} \cap (C_{D}+z)) = \liminf_{m\to\infty}\frac{\log G_{m}(z)}{m\log|b|}
\end{equation}
where 
\begin{equation}
G_{m}(z) := |D\cap(D+z_{1})||D\cap(D+z_{2})|\cdots|D\cap(D+z_{m})|
\end{equation}
and $z = 0.z_{1}z_{2}\ldots$ with $z_{k}\in D-D$. 
\end{theorem}

The original statement in \cite{PS21} assumes every pair of distinct elements of $D$ is at least distance $n+1$ apart. Our extension of the formula coincides with the extension of one of its corollaries. Let $F = \{z: C_{D}\cap(C_{D}+z)\neq\emptyset\}$. For $0\leq\beta\leq1$, let us consider those elements of $z\in F$ such that $\dim_{B}(C_{D}\cap(C_{D}+z)) = \beta\dim_{B}C_{D}$. 

\begin{corollary} \label{cor:earlymoreDenseTranslates}
Assume the hypotheses of Theorem \ref{thm:earlyminimalSep}. The set $F_{\beta}$ is dense in $F$ for any $\beta\in[0, 1]$. 
\end{corollary}

\section*{\textbf{Organization}} 
This paper is separated into five sections and two appendices. 
\begin{enumerate}
\setlength\itemsep{1em}
\item[(1)] Section~\ref{section:realCase} reviews the basics of multiplicative invariance in the unit interval and includes concepts from fractal geometry and symbolic dynamics that are present throughout the paper. 
\item[(2)] Section~\ref{section:IFSs} introduces a definition for a kind of invariance concerning iterated function systems and develops tools that are used to prove the main result in Section \ref{section:mainResult}.
\item[(3)] Section~\ref{section:complexCase} defines $\times b$-invariance and includes background on base-$(-n+i)$ expansions of complex numbers.
\item[(4)] Section~\ref{section:mainResult} includes the proofs of the statements in the main result (Theorem \ref{thm:main}). 
\item[(5)] Section~\ref{section:intersectionResults} extends results on the box-counting dimension of intersections of restricted digit Cantor sets and their translates (Theorem \ref{thm:minimalSep}). 
\item[(A)] Appendix \ref{app:a} illustrates the derivation of the rules governing base-$(-n+i)$ expansions when $n\geq3$. 
\item[(B)] Appendix \ref{app:b} includes the rules governing the special case of base-$(-2+i)$ expansions. 
\end{enumerate}

\section{\textbf{Multiplicative Invariance in $\mathbb{R}$}} \label{section:realCase} 

In this section we recall multiplicative invariance for subsets of the unit interval and review their fractal properties which inspired the main result. 

\begin{definition} \label{def:timesr} 
Let $r$ be a positive integer. Define the map
\begin{equation}
\begin{split}&T_{r} : \mathbb{R} \rightarrow [0, 1) \\
&x \mapsto rx\mod{1}
\end{split}
\end{equation}
A nonempty closed subset $Y \subset [0, 1]$ is called $\times r-$\textit{invariant} if $T_{r}(Y) \subset Y$. A subset $Y$ is called \textit{multiplicatively invariant} if it is $\times r$-invariant for some $r\geq2$.
\end{definition}

\begin{example} 
Let $r$ be a positive integer. Suppose $D$ is a nonempty subset of $\Lambda_{r}:=\{0, 1,\ldots, r-1\}$. We call the set 
\begin{equation}
C_{r, D} := \bigg\{\sum_{k\geq1}d_{k}r^{-k}\in\mathbb{R} : d_{k}\in D\bigg\}\label{equation}
\end{equation}
the \textit{base}-$r$ \textit{restricted digit Cantor set with digit set} $D$. These sets are $\times r$-invariant. 
\end{example}
We bring attention to a slight abuse of terminology. This example includes both the cases when $D$ is either a singleton or equal to $\Lambda_{r}$. The set $C_{r, D}$ is not a Cantor set in these two extreme cases.  

The fractal properties of multiplicatively invariant sets are expressed through their Hausdorff and box-counting dimensions. We recall these dimensions here. 

\begin{definition} 
Let $\delta > 0$ and $V$ be a subset of a metric space $X$. A countable collection of sets $\{U_{k} \subset X\}$ is called a $\delta$-\textit{cover} of $V$ if 
\begin{enumerate}
\item[(i)] $V \subset \bigcup_{k}U_{k}$,
\item[(ii)] $\diam{U_{k}} \leq \delta$ for each $k$. 
\end{enumerate}
\end{definition}

\begin{definition} 
Let $V$ be a subset of a metric space and let $s > 0$. For every $\delta > 0$, define the quantity
\begin{equation}
\mathcal{H}_{\delta}^{s}(V) := \inf{\bigg\{\sum_{k}(\diam{U_{k}})^{s} : \{U_{k}\}\: \text{is a $\delta$-cover of}\: V \bigg\}}.
\end{equation}
The $s$-\textit{dimensional Hausdorff measure} of $V$ is the limiting value $\mathcal{H}^{s}(V) := \lim_{\delta\rightarrow 0^{+}}\mathcal{H}_{\delta}^{s}(V)$. 

We call the quantity
\begin{equation}
\dim_{H}{V} := \inf{\{s\geq0 : \mathcal{H}^{s}(V) = 0\}}
\end{equation}
the \textit{Hausdorff dimension} of $V$. 
\end{definition}

The Hausdorff dimension can be equivalently defined using less general covers. For example, it is common to add the condition that the $\delta$-covers only contain balls.
\begin{proposition} \label{rem:balls} [K. Falconer, \cite{F90}, section 2.4] 
Let $V$ be a subset of a metric space and define 
\begin{equation}
\mathcal{B}_{\delta}^{s}(V) :=  \inf{\bigg\{\sum_{k}(\diam{B_{k}})^{s} : \{B_{k}\}\: \text{is a $\delta$-cover of}\: V\:\text{by balls} \bigg\}}.
\end{equation}
Then $\dim_{H}{V} := \inf{\{s\geq0 : \mathcal{B}^{s}(V) = 0\}}$ where $\mathcal{B}^{s}(V) =  \lim_{\delta\rightarrow 0^{+}}\mathcal{B}_{\delta}^{s}(V)$.
\end{proposition}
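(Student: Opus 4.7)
The plan is to show that the two candidate characterizations of $\dim_{H}F$ agree by establishing a two-sided comparison between $\mathcal{H}^{s}_{\delta}(F)$ and $\mathcal{B}^{s}_{\delta}(F)$ for every $s \geq 0$ and $\delta > 0$. One direction is immediate: since every $\delta$-cover by balls is in particular a $\delta$-cover, the infimum defining $\mathcal{H}^{s}_{\delta}$ is taken over a larger family than the one defining $\mathcal{B}^{s}_{\delta}$, so $\mathcal{H}^{s}_{\delta}(F) \leq \mathcal{B}^{s}_{\delta}(F)$.

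For the reverse comparison, I would take an arbitrary $\delta$-cover $\{U_{k}\}$ of $F$ and enclose each nonempty $U_{k}$ in a ball of controlled size. The simplest choice is to pick any point $x_{k} \in U_{k}$ and let $B_{k}$ be the closed ball centered at $x_{k}$ of radius $\diam U_{k}$; then $U_{k} \subset B_{k}$ and $\diam B_{k} \leq 2\diam U_{k}$. The family $\{B_{k}\}$ is a $2\delta$-cover of $F$ by balls, giving
\begin{equation*}
\mathcal{B}^{s}_{2\delta}(F) \leq \sum_{k} (\diam B_{k})^{s} \leq 2^{s} \sum_{k} (\diam U_{k})^{s}.
\end{equation*}
Taking the infimum over all $\delta$-covers of $F$ yields $\mathcal{B}^{s}_{2\delta}(F) \leq 2^{s}\,\mathcal{H}^{s}_{\delta}(F)$, and then sending $\delta \to 0^{+}$ gives $\mathcal{H}^{s}(F) \leq \mathcal{B}^{s}(F) \leq 2^{s}\,\mathcal{H}^{s}(F)$.

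From this sandwich, $\mathcal{H}^{s}(F) = 0$ if and only if $\mathcal{B}^{s}(F) = 0$, so the two infima $\inf\{s \geq 0 : \mathcal{H}^{s}(F) = 0\}$ and $\inf\{s \geq 0 : \mathcal{B}^{s}(F) = 0\}$ coincide. There is no real obstacle in this argument; the only mild subtlety is controlling the diameter of the replacement ball. The factor of $2^{s}$ is harmless because it disappears upon checking the vanishing threshold, so the crude construction above is enough and there is no need to invoke a sharper bound such as Jung's theorem.
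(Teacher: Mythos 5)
Your argument is correct and is exactly the standard comparison argument from Falconer that the paper cites for this proposition (the paper itself gives no proof): one inequality is free because ball covers are covers, and the other follows by enclosing each $U_{k}$ in a ball of diameter at most $2\diam U_{k}$, with the resulting factor $2^{s}$ irrelevant to where $\mathcal{H}^{s}$ and $\mathcal{B}^{s}$ jump from $\infty$ to $0$. The only cosmetic point worth noting is that the paper's displayed definition of $\mathcal{B}^{s}(F)$ contains a typo ($\lim_{\delta\rightarrow0^{+}}\mathcal{H}^{s}_{\delta}(F)$ should read $\lim_{\delta\rightarrow0^{+}}\mathcal{B}^{s}_{\delta}(F)$), which you have implicitly and correctly repaired.
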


The Hausdorff dimension exhibits desirable properties, but it is difficult to compute directly. The box-counting dimension is a popular alternative because of the comparative ease of computing it. 

\begin{definition}\label{def:boxcounting} 
Let $V$ be a subset of a metric space $X$. Let $\delta > 0$. Let $N_{\delta}(V)$ denote the minimum number of subsets of $X$ of diameter at most $\delta$ required to cover $V$. If it exists, we call the limit
\begin{equation}
\dim_{B}V := \lim_{\delta\rightarrow0^{+}}\frac{\log{N_{\delta}(V)}}{-\log{\delta}}
\end{equation}

the\textit{ box-counting dimension} of $V$. 
\end{definition}

In the event the limit does not exist, we refer to the upper and lower limits of the above function of $\delta$ as the \textit{upper} and \textit{lower} box-counting dimensions respectively. This fractal dimension is useful because the $N_{\delta}$ function has several equivalent formulations (see \cite{F90} section 3.1 for a list). In particular, we use the fact that we may replace $N_{\delta}$ by the function that takes a set $V$ to the minimum number of closed balls of radius $\delta$ needed to cover it in the proof of Lemma \ref{lem:boxtile}.

Multiplicatively invariant subsets of the unit interval are also connected to subshifts. We recall the relevant definitions. 

\begin{definition}\label{def:subshifts}
Let $\Omega$ be a finite set equipped with the discrete topology. Let $\Sigma = \Omega^{\mathbb{N}}$ be the sequence space equipped with the product topology and define the left shift map 
$$\sigma: \Sigma\rightarrow\Sigma$$ 
$$(\omega_{k})_{k\geq1} \mapsto (\omega_{k+1})_{k\geq1}.$$

We call $A\subset X$ a \textit{subshift} if it is closed and satisfies $\sigma(A)\subset A$. 
\end{definition}

\begin{definition} \label{def:topent} 
Let $A$ be a subshift. The \textit{topological entropy} of $A$ is the limit 
\begin{equation}
\mathcal{E}(A) :=  \lim_{n\rightarrow\infty}\frac{\log|\mathcal{L}_{n}(A)|}{n}
\end{equation}
where $\mathcal{L}_{n}(A) := \{(a_{1}, a_{2}, \ldots, a_{n}) : a_{1} = \omega_{1},\ldots,a_{n}=\omega_{n}\;\textit{for some}\; (\omega_{k})_{k\geq1}\in A\}$. 
\end{definition}

We remark that a more general definition of topological entropy can be found in chapter 7 section 1 of \cite{W70} for continuous maps defined on compact spaces. This more general formulation is shown in theorem 7.13 of \cite{W70} to reduce to the formula above in the case of subshifts. In particular, the limit exists. 

We now state a result of Furstenberg's (\cite{F67}, proposition III.1) about multiplicatively invariant subsets of $[0, 1]$ in two parts. We state similar results for a class of subsets of $\mathbb{C}$ in section \ref{section:mainResult}. 

\begin{theorem}\label{thm:dimtop} [H. Furstenberg,  \cite{F67}, proposition III.1] 
Let $r\geq2$ be an integer. If $A \subset \Lambda_{r}^{\mathbb{N}}$ is a subshift, then 
\begin{itemize}
\item[(i)]$\pi(A) = \{\sum_{k\geq1}a_{k}r^{-k}:(a_{k})_{k\geq1}\in A\}$ is $\times r$-invariant, 
\item[(ii)]$\dim_{B}\pi(A) = \frac{\mathcal{E}(A)}{\log{r}},$
\end{itemize}
where $\pi:\Lambda_{r}^\mathbb{N} \rightarrow \mathbb{R}$ is given by $(\omega_{k})_{k\geq1} \mapsto \sum_{k\geq1}\omega_{k}r^{-k}$. 
\end{theorem}

\begin{theorem}\label{thm:hb} [H. Furstenberg, \cite{F67}, proposition III.1] 
Let $Y$ be a $\times r$-invariant set. Then $\dim_{H}Y = \dim_{B}Y.$
\end{theorem}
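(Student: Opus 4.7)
The inequality $\dim_{H}X \leq \underline{\dim}_{B}X$ holds for any bounded set, so the task reduces to showing $\overline{\dim}_{B}X \leq \dim_{H}X$. I would compute box dimension combinatorially via $N_{n}(X) := |\{I \in \mathcal{I}_{n} : I \cap X \neq \emptyset\}|$, where $\mathcal{I}_{n}$ denotes the collection of half-open $r$-adic intervals of length $r^{-n}$; this agrees with $N_{r^{-n}}(X)$ from Definition~\ref{def:defbox} up to a constant depending only on $r$, as referenced after Definition~\ref{def:defbox}, so $\overline{\dim}_{B}X = \limsup_{n} \log N_{n}(X)/(n \log r)$.

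The engine is a single consequence of $\times r$-invariance: for any $I \in \mathcal{I}_{m}$, the map $T_{r}^{m}$ restricted to $I$ is the affine bijection onto $[0,1)$ scaling by $r^{m}$ and sending level-$(m+p)$ subintervals of $I$ onto $\mathcal{I}_{p}$; since $T_{r}^{m}(X \cap I) \subseteq X$, at most $N_{p}(X)$ of these subintervals can meet $X$. Now fix $s > \dim_{H}X$, so $\mathcal{H}^{s}(X) = 0$. Since $X$ is compact, for any $\eta > 0$ there is a finite cover $\{U_{k}\}_{k=1}^{M}$ of $X$ with $|U_{k}| < 1/r$ and $\sum_{k} |U_{k}|^{s} < \eta$. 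Assign each $U_{k}$ the integer $n_{k} \geq 1$ satisfying $r^{-n_{k}-1} \leq |U_{k}| < r^{-n_{k}}$; because $\diam U_{k} < r^{-n_{k}}$, $U_{k}$ meets at most two members of $\mathcal{I}_{n_{k}}$. For $L \geq \max_{k} n_{k}$, attribute each $J \in \mathcal{I}_{L}$ meeting $X$ to some $U_{k}$ containing a chosen point $x_{J} \in X \cap J$; applying the preceding estimate to each of the (at most two) level-$n_{k}$ intervals meeting $U_{k}$ bounds the number of $J$'s attributed to $k$ by $2 N_{L-n_{k}}(X)$. Summing over $k$,
\[ N_{L}(X) \leq 2 \sum_{k=1}^{M} N_{L-n_{k}}(X). \]

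Setting $c_{L} := N_{L}(X) r^{-Ls}$ rewrites this as $c_{L} \leq \sum_{k} (2 r^{-n_{k} s}) c_{L-n_{k}}$, and the bound $r^{-n_{k} s} \leq r^{s} |U_{k}|^{s}$ yields $\sum_{k} 2 r^{-n_{k} s} \leq 2 r^{s} \eta$. Choosing $\eta < 1/(2 r^{s})$ makes the weights sum to some $q < 1$, so $c_{L} \leq q \max_{k} c_{L-n_{k}}$; since the finite initial segment $\{c_{L}\}_{L < \max_{k} n_{k}}$ is controlled by the crude estimate $N_{L} \leq r^{L}$, a straightforward induction on $L$ gives $\sup_{L} c_{L} < \infty$, hence $N_{L}(X) \leq C r^{Ls}$ and $\overline{\dim}_{B}X \leq s$. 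Letting $s \downarrow \dim_{H}X$ completes the proof. The principal difficulty is closing the recursion: a naive induction aimed at $N_{L}(X) \leq C r^{Ls}$ accumulates an unavoidable factor of $r^{s}$ per step; the remedy is to normalize to $c_{L}$, so that the recursion's coefficients become the dimensionally correct weights $r^{-n_{k} s}$, and to shrink the cover so these weights sum to strictly less than one.
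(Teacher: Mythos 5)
Your argument is correct, but it is not the route the paper takes. The paper proves this statement only by citation to Furstenberg; its own proof of the analogous fact is Theorem~\ref{thm:dimeq}, which argues in the opposite logical direction: assuming $s<\dim_{B}Y$, it shows every finite cover by tiles satisfies $\sum_{k}(\diam T_{k})^{s}\geq c>0$, by encoding the tiles as a finite set of words $\{\rho_{k}\}$ dividing all sufficiently long admissible words, assuming $\sum_{k}|b|^{-sl(\rho_{k})}<1$ for contradiction, summing $|b|^{-sl(\rho)}$ over the generated semigroup as a geometric series, and playing that convergence off against the divergence of $\sum_{m}|\mathcal{L}_{m}||b|^{-sm}$ guaranteed by $s<\mathcal{E}/\log|b|$. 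You instead start from $s>\dim_{H}X$, extract a finite cover with small $s$-sum, and use the single dynamical input (that $T_{r}^{m}$ carries the level-$(m+p)$ cells of a level-$m$ cell meeting $X$ injectively onto level-$p$ cells meeting $X$) to close the renewal inequality $N_{L}\leq 2\sum_{k}N_{L-n_{k}}$, normalizing by $r^{-Ls}$ so the weights sum to less than one. The two proofs exploit the same self-similarity of the cell structure under invariance, but yours is more elementary and self-contained: it needs no symbolic coding, no entropy, and no lower bound on a Hausdorff-type pre-measure, and it outputs the explicit quantitative estimate $N_{L}(X)\leq Cr^{Ls}$. What the paper's route buys is that the same symbolic machinery simultaneously delivers the entropy formula $\dim_{B}=\mathcal{E}/\log|b|$ of Theorem~\ref{thm:top}, which is needed elsewhere; your recursion proves only the dimension equality. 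Two cosmetic points to tidy if you write this up: discard or fatten any cover elements of zero diameter before assigning the scales $n_{k}$ (so that $r^{-n_{k}-1}\leq|U_{k}|$ makes sense), and note that the half-open $r$-adic grid misses the point $1\in[0,1]$, which changes $N_{L}$ by at most one and is harmless.
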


\begin{remark}
In \cite{F67}, proposition III.1 states that the Hausdorff and box-counting dimensions of the set $\pi(A)$ in Theorem~\ref{thm:dimtop} are equal. The preimage of a $\times r$-invariant set under $\pi$ is a subshift of $\Lambda_{r}^{\mathbb{N}}$ and hence we can claim the equality for Hausdorff and box-counting dimensions for all $\times r$-invariant sets. 
\end{remark}

\begin{example} 
The middle-third Cantor set is the image of the set of sequences $\{(a_{k})_{k\geq1}:a_{k}\in{0, 2}\}$ under the map $(a_{k})_{k\geq1}\mapsto\sum_{k\geq}a_{k}3^{-k}$ in Theorem~\ref{thm:dimtop}. The topological entropy of this subshift according to Definition~\ref{def:topent} is $\log{2}$. It follows from the previous two theorems that $\dim_{H}C_{3,\{0,2\}} = \dim_{B}C_{3,\{0, 2\}} = \log{2}/\log{3}$.  
\end{example}

The proof of Theorem \ref{thm:hb} in \cite{F67} makes use of a technical fact about subshifts. We use this fact in our proof of Theorem \ref{thm:dimEq}. To state it we introduce the following constructions. 

Let $A\subset \Sigma = \Omega^{\mathbb{N}}$ be a subshift. Let $L = \cup_{n\geq1}\Omega^{n}$. This is the set of all finite words written using the alphabet $\Omega$. Let $R$ be the subset of $L$ containing those tuples which occur as (finite) subwords of sequences in $A$. The set $\mathcal{L}_{n}(A)$ can be viewed as the elements of $R$ of length $n$. We denote the length of a word $\rho$ by $l(\rho)$. The set $R$ is a semigroup under concatenation. Let us say that a word $\rho$ \textit{divides} a word $\rho^{'}$ if $\rho^{'} = \rho\rho_{1}$ for some $\rho_{1}\in L$. 

\begin{lemma} \label{lem:semigroup} 
Let $A\subset \Sigma$ be a subshift. Let $R$ be the collection of all finite subwords of sequences in $A$. Suppose there exists a finite collection of subwords $\{\rho_{k}\}_{k=1}^{K}\subset R$ such that whenever $\rho\in R$ is of sufficient length, it is divisible by $\rho_{k}$ for some $k$. If 
$\sum_{k=1}^{K}r^{sl(\rho_{k})} < 1$ where $s > 0$ and $r\in(0, 1)$, then $\sum_{R} r^{sl(\rho)}$ converges. 
\end{lemma}

\begin{proof}
Let $\langle\rho_{k}\rangle$ be the semigroup generated by $\{\rho_{k}\}_{k=1}^{K}$ using concatenation. We have 
\begin{equation}\label{eq:semifinite}
\begin{split}
\sum_{\langle\rho_{k}\rangle}r^{sl(\rho_{k_{1}}\rho_{k_{2}}\ldots\rho_{k_{n}})} &= \sum_{n=1}^{\infty}\sum_{(k_{1}, k_{2},\ldots, k_{n})}r^{sl(\rho_{k_{1}}\rho_{k_{2}}\ldots\rho_{k_{n}})} \\
&=\sum_{n=1}^{\infty}\bigg(\sum_{(k_{1}, k_{2},\ldots, k_{n})}\prod_{i=1}^{n}r^{sl(\rho_{k_{i}})}\bigg) \\
&=\sum_{n=1}^{\infty}\bigg(\sum_{k=1}^{K}r^{sl(\rho_{k})}\bigg)^{n}. \\
\end{split}
\end{equation}
The last sum is a convergent geometric series by our assumption that $\sum_{k=1}^{K}r^{sl(\rho_{k})} < 1$. We can use this to prove that $\sum_{R}r^{sl(\rho)}$ converges. It is always the case that if $\rho = \rho_{1}\rho_{2}\in R$, then $\rho_{1}\in R$. By the shift invariance of $A$, it must also be that $\rho_{2}\in R$. By assumption, the set $\{\rho_{k}\}_{k=1}^{K}$ has the property that every element of $R$ of length greater than some $N$ is divisible by one of the elements of $\{\rho_{k}\}_{k=1}^{K}$. Combining these two properties allows us to divide until there is no more room to do so. This yields
\begin{equation}
\rho = \rho_{k_{1}}\rho_{k_{2}}\ldots\rho_{k_{n}}\rho_{j}^{'}
\end{equation}
where $\rho_{j}^{'}$ is some element of $R$ that is of insufficient length to be divided further. The set of these remainders is finite since there are only finitely many words whose length is less than $N$, say $J$ of them. It suffices to argue that $\sum_{R}r^{sl(\rho)}$ is finite when we restrict the index set to those words of length at least $N$. Observe that
\begin{align}
\sum_{\rho\in R, l(\rho)\geq N}r^{sl(\rho)} &= \sum_{\rho\in R, l(\rho)\geq N}r^{sl(\rho_{k_{1}}\rho_{k_{2}}\ldots \rho_{k_{n}}\rho_{j}^{'})} \\
&< \sum_{\langle\rho_{k}\rangle}\sum_{j=1}^{J}r^{sl(\rho_{k_{1}}\rho_{k_{2}}\ldots\rho_{k_{n}}\rho_{j}^{'})} \\
&< J\sum_{\langle\rho_{k}\rangle}r^{sl(\rho_{k_{1}}\rho_{k_{2}}\ldots\rho_{k_{n}})}.\label{eq:boundingSeries}
\end{align}
The last quantity is finite since the quantity in~(\ref{eq:semifinite}) is finite. This implies that $\sum_{R}r^{sl(\rho)}$ converges.
\end{proof}

\section{\textbf{Invariance for Iterated Function Systems}} \label{section:IFSs} 

This section contains a number of definitions and lemmas concerning iterated function systems that are used to prove our main result (Theorem~\ref{thm:main}). 


Let $(X, d)$ be a metric space. A map $f:X\to X$ is called a \textit{contraction} if there exists $c\in(0, 1)$ such that for all $x, y\in X$ we have $d(f(x), f(y)) \leq cd(x, y)$.

A finite collection of contractions $\mathcal{F}$ is called an iterated function system (IFS). We can use $\mathcal{F}$ to define a map $F:\mathcal{P}(X)\to\mathcal{P}(X)$ by setting
\begin{equation}
F(S) = \cup_{f\in\mathcal{F}}f(S),
\end{equation}
for any $S\subset X$. The map $F$ is sometimes referred to as the Hutchinson operator. 

Hutchinson proved in \cite{H81} that there exists, among the class of nonempty compact subsets of a complete metric space, a unique set invariant under $F$. 
\begin{theorem} \label{thm:hutchinson} [J.E. Hutchinson, \cite{H81}, theorem 1]
Let $X$ be a complete metric space and $\mathcal{F}$ be an IFS defined on $X$. There exists a unique nonempty compact set $E\subset X$ such that 
$F(E) = E$.  
\end{theorem}

The set $E$ is called the \textit{attractor} of the IFS. There is a natural way of relating the attractor to a symbolic space. This is accomplished by viewing the IFS $\mathcal{F}$ as a finite alphabet. 

\begin{definition} 
Let $\mathcal{F} = \{f_{1}, f_{2}, \ldots, f_{n}\}$ be an IFS defined on a complete metric space $X$. Let $E$ denote the attractor of $\mathcal{F}$ and $\Sigma = \{1, 2,\ldots, n\}^{\mathbb{N}}$. The map $\pi:\Sigma\to E$ defined by
\begin{equation}
\pi((a_{k})_{k\geq1}) = \cap_{m\geq1}(f_{a_{1}}\circ f_{a_{2}}\circ \cdots \circ f_{a_{m}})(E)
\end{equation}
is called the \textit{coding map} and $\Sigma$ is referred to as the \textit{coding space}.
\end{definition}

We can see that the coding map is surjective by considering the orbit of the attractor under $F$. 

If the attractor $E$ is a subset of euclidean space then the coding map is equivalent to $(a_{k})_{k\geq1} \mapsto \lim_{m\to\infty}(f_{a_{1}}\circ f_{a_{2}}\circ \cdots \circ f_{a_{m}})(0)$. For a fixed word $a_{1}a_{2}\cdots a_{m}$ we call the cylinder set $E_{a_{1}, a_{2},\ldots, a_{m}} := (f_{a_{1}}\circ f_{a_{2}}\circ \cdots \circ f_{a_{m}})(E)$ an \textit{$m$-tile}. Using this language, the coding map takes a decreasing sequence of $m$-tiles and maps them to the unique point in their intersection. 

There are a number of special classes of IFS. In this document we focus on those that satisfy the strong separation condition.

\begin{definition} 
Let $\mathcal{F}$ be an IFS defined on a complete metric space $X$. Let $E$ denote the attractor of $\mathcal{F}$. The IFS $\mathcal{F}$ satisfies the \textit{strong separation condition} if for every pair of distinct maps $f_{1}, f_{2}\in\mathcal{F}$, we have $f_{1}(E)\cap f_{2}(E) = \emptyset$. 
\end{definition}

The coding map $\pi$ is injective under the strong separation condition. This can be used to define a map $T: E \to E$ given by $T = \pi \circ \sigma \circ \pi^{-1}$. Here, $\sigma$ denotes the left shift on the coding space. 

\begin{definition} \label{def:contractInvariant}
Let $X$ be a complete metric space. Suppose that $E \subset X$ is the attractor of an IFS satisfying the strong separation condition. A nonempty closed subset $K$ of $E$ is called \textit{$\times (c_{1}, c_{2}, \ldots, c_{n})$-invariant} if $T(K) \subset K$. Here the numbers $c_{1}, c_{2}, \ldots, c_{n}$ are the contraction coefficients associated with the IFS. If the IFS is homogeneous ($c_{1} = c_{2} = \cdots = c_{n} = c$), then we simply call the set $K$ \textit{$\times c$-invariant}. 
\end{definition}

\begin{lemma} \label{lem:entropyIFS} 
Let $X$ be a complete metric space. Suppose that $E\subset X$ is the attractor corresponding to an IFS that satisfies the strong separation condition. Let $\Sigma$ be the coding space associated with this system and let $\pi$ be the coding map. Suppose that $A\subset\Sigma$ is a subshift. 
\begin{itemize}
\item[(i)] The set $\pi(A)$ is $\times (c_{1}, c_{2}, \ldots, c_{n})$-invariant, where $c_{1}, c_{2}, \ldots, c_{n}$ are the contraction coefficients associated with the IFS. 
\item[(ii)] We have $\mathcal{E}(A)= \lim_{m\to\infty}\frac{\log{N_{m}(\pi(A))}}{m}$, where $N_{m}(\pi(A))$ denotes the smallest number of $m$-tiles required to cover $\pi(A)$ and $\mathcal{E}(A)$ is the topological entropy of $A$. 
\end{itemize}

Moreover, if $K\subset E$ is $\times (c_{1}, c_{2}, \ldots, c_{n})$-invariant, then $\pi^{-1}(K) \subset \Sigma$ is a subshift. 
\end{lemma}

\begin{proof} 
Since the coding map is continuous and $A$ is compact, we obtain that $\pi(A)$ is a compact subset of $X$ and therefore is closed. To see the invariance, let $x\in \pi(A)$. We have $x = \pi((a_{k})_{k\geq1})$. If $T = \pi\circ\sigma\circ\pi^{-1}$ where $\sigma$ is the left shift operator on the coding space, then we see that
$T(x) = \pi((a_{k+1})_{k\geq1})$. The sequence $(a_{k+1})_{k\geq1}$ is an element of $A$ since $A$ is a subshift and so we see that $\pi(A)$ is invariant under $T$. This proves claim $(i)$. This same entwining of $\pi$ and $\sigma$, in addition to the continuity of $\pi$, shows that the preimage of a $\times (c_{1}, c_{2}, \ldots, c_{n})$-invariant subset $K$ under $\pi$ is a subshift. 

We now observe that the topological entropy of $A$ can be expressed using covers of $\pi(A)$ by $m$-tiles. The assumption that $\mathcal{F}$ satisfies the strong separation condition implies that $\pi$ is a bijection. The coding map then induces a bijective correspondence between the cylinder sets $[a_{1},a_{2},\ldots,a_{m}]$ and the $m$-tiles. This is a bijection between $m$-tiles and the subwords of length $m$ in $A$. Using the notation developed for subshifts in Section \ref{section:realCase}, we have 
\begin{equation}
N_{m}(\pi(A))=|\mathcal{L}_{m}(A)|
\end{equation}
and in particular,
\begin{equation}
\frac{\log{N_{m}(\pi(A))}}{m} = \frac{\log{|\mathcal{L}_{m}(A)|}}{m}.
\end{equation}
Taking the limit as $m\rightarrow\infty$ yields the result. 
\end{proof} 

\begin{definition} \label{def:tileMeasure} 
Let $X$ be a complete metric space. Suppose that $E\subset X$ is the attractor corresponding to an IFS $\mathcal{F}$.
For $s, \delta >0$ and $V\subset E$, we define the quantity
\begin{equation}
\mathcal{T}_{\delta}^{s}(V) := \inf{\bigg\{\sum_{k=1}^{\infty}(\diam{T_{k}})^s : \{T_{k}\}\; \text{is a $\delta$-cover of}\;V\;\text{where each $T_{k}$ is an $m_{k}$-tile}\bigg\}}.
\end{equation}
We denote the limit $\lim_{\delta\rightarrow0^{+}}\mathcal{T}_{\delta}^{s}(V)$ by $\mathcal{T}^{s}(V)$.
\end{definition}

\begin{lemma} \label{lem:francescowasright} 
Let $X$ be a complete metric space and $E\subset X$ be the attractor of a homogeneous IFS with contraction coefficient $c$. Suppose that the number of $m$-tiles that a ball of diameter less than or equal to $c^{m}\diam{E}$ is bounded by a constant independent of $m$. For any $V\subset E$, we have $\dim_{H}{V} = \inf{\{s\geq0 : \mathcal{T}^{s}(V) = 0\}}$.
\end{lemma}
\begin{proof}
Suppose that $\{B_{k}\}$ is a $\delta$-cover of $V$ by balls. Since our ultimate concern is with the limit as $\delta$ tends to zero, we assume $\delta\in(0, 1)$. 

For each $k$, we can find an integer $m_{k}$ such that $c^{m_{k}+1}\diam{E} < \diam{B_{k}}\leq c^{m_{k}}\diam{E}$. The collection of these $m_{k}$-tiles, over all $k$, form a $c^{-1}\delta$-cover of $V$. Let $T^{(k)}_{j}$ denote the $j$th $m_{k}$-tile that intersects $B_{k}$. Let $M$ be the upper bound on the number of $m_{k}$-tiles that $B_{k}$ can intersect. For $s>0$ we have
\begin{align}
\sum_{k}\sum_{j}(\diam{T^{(k)}_{j}})^{s} &\leq \sum_{k}M(\diam{T^{(k)}_{1}})^{s} \\
&=  M\sum_{k}(c^{m_{k}}\diam{E})^{s} \\
&= Mc^{-s}\sum_{k}(c^{m_{k}+1}\diam{E})^{s} \\
&\leq Mc^{-s}\sum_{k}(\diam{B_{k}})^{s}.
\end{align}
Since $\{T^{(k)}_{j}\}$ is a collection of $m_{k}$-tiles that form a $c^{-1}\delta$-cover of $V$, we obtain
\begin{equation}
\mathcal{T}_{c^{-1}\delta}^{s}(V) \leq Mc^{-s}\sum_{k}(\diam{B_{k}})^{s}.
\end{equation}
Since the $\delta$-cover of balls is arbitrary, this implies $\mathcal{T}_{c^{-1}\delta}^{s}(V) \leq Mc^{-s}\mathcal{B}_{\delta}^{s}(V)$ (see Proposition~\ref{rem:balls} to recall this notation). The Hausdorff measure is defined using arbitrary countable $\delta$-covers and so we immediately have $\mathcal{H}_{c^{-1}\delta}^{s}(V)\leq \mathcal{T}_{c^{-1}\delta}^{s}(V)$. Taking limits as $\delta\rightarrow0^{+}$ yields
\begin{equation}
\mathcal{H}^{s}(V)\leq \mathcal{T}^{s}(V) \leq Mc^{-s}\mathcal{B}^{s}(V).
\end{equation}
Both $\mathcal{H}^{s}(V)$ and $\mathcal{B}^{s}(V)$ have the property that they are $+\infty$ for $s < \dim_{H}V$ and $0$ for $s > \dim_{H}V$. It follows that $\mathcal{T}^{s}(V)$ shares this property. Therefore 
\begin{equation}
\inf{\{s\geq0:\mathcal{T}^{s}(V) = 0\}} = \inf{\{s\geq0:\mathcal{H}^{s}(V) = 0\}} = \dim_{H}V.
\end{equation}
\end{proof}


\section{\textbf{Multiplicative Invariance in $\mathbb{C}$}} \label{section:complexCase} 

In this section, we define $\times b$-invariance for a class of subsets of the complex plane where $b$ is some Gaussian integer. This will be analogous to Definition~\ref{def:timesr} ($\times r$-invariance). The classical examples of $\times r$-invariant sets are the restricted digit Cantor sets. Those sets are captured by restricting digits in a specified number system. For example, the middle-thirds Cantor set is the set of numbers in the unit interval whose ternary expansions do not use the digit $1$. We proceed similarly by presenting a number system for writing complex numbers with respect to a Gaussian integer base $b$. 

The following result from \cite{KS74} provides conditions on a Gaussian integer $b$ to ensure that any complex number can be written with respect to $b$ where the coefficients of the expansion are chosen from the set $\{0, 1, \ldots, |b|^{2}-1\}$. This choice is in some sense canonical due to its similarity to the usual choice of digits when representing real numbers using an integer base. 

\begin{theorem} \label{thm:radixExistence}[I. Katai, J. Szabo, \cite{KS74}, theorem 2] 
Suppose $n$ is a positive integer and set $b = -n+i$. Let $z$ be an element of $\mathbb{C}$. There exist coefficients $d_{k}\in\Lambda :=\{0, 1, \ldots, |b|^{2}-1\}$ and some integer $\ell$ such that 
\begin{equation}
z = d_{\ell}b^{\ell} + d_{\ell-1}b^{\ell-1} + \cdots + d_{0} + \sum_{k\geq1}d_{-k}b^{-k}.
\end{equation}
\end{theorem}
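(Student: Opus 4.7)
\textbf{The plan} is to establish the expansion in two stages: first for Gaussian integers via a greedy algorithm modulo $b$, then for arbitrary complex numbers using the attractor of the associated iterated function system to realize the infinite tail. As a preparatory step I would verify that $\Lambda_{b} = \{0, 1, \ldots, n^{2}\}$ is a complete system of residues for $\mathbb{Z}[i]/(b)$. Since $|\mathbb{Z}[i]/(b)| = |b|^{2} = n^{2}+1 = |\Lambda_{b}|$, pairwise distinctness suffices: if $j \equiv k \pmod{b}$ with $j, k \in \Lambda_{b}$, then $b \mid (j-k)$ in $\mathbb{Z}[i]$, and since $j-k$ is a rational integer, conjugating also gives $\bar b \mid (j-k)$; hence $b\bar b = n^{2}+1$ divides $j-k$, forcing $j = k$ because $|j-k| \leq n^{2}$.

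For $z \in \mathbb{Z}[i]$, define $T \colon \mathbb{Z}[i] \to \mathbb{Z}[i]$ by $T(z) = (z - d(z))/b$, where $d(z) \in \Lambda_{b}$ is the unique residue of $z$ modulo $b$. Then $z = d(z) + bT(z)$ iterates to $z = \sum_{k=0}^{\ell-1} d(T^{k}z)\, b^{k} + b^{\ell} T^{\ell}(z)$, so a finite expansion exists as soon as $T^{\ell}(z) = 0$ for some $\ell$. The estimate $|T(z)| \leq (|z| + n^{2})/|b|$ gives strict norm decrease whenever $|z| > n^{2}/(|b|-1)$, so the orbit eventually enters a bounded disk containing only finitely many Gaussian integers. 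Within that disk one must check that every orbit reaches $0$, equivalently that $T$ has no periodic cycle other than $\{0\}$. \textbf{This non-cycle verification is the main obstacle}: the norm estimate alone only confines the orbit, and ruling out spurious cycles requires either direct enumeration of the finitely many candidates or exploiting the identity $(1 - b^{p})z = \sum_{j=0}^{p-1} d(T^{j}z)\, b^{j}$ satisfied by any $p$-cycle.

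Finally, to handle a general $z \in \mathbb{C}$, consider the compact attractor $K := \{\sum_{k=1}^{\infty} d_{-k} b^{-k} : d_{-k} \in \Lambda_{b}\}$ of the IFS $\{f_{d}(w) = (w+d)/b : d \in \Lambda_{b}\}$. The self-similarity $bK = \bigcup_{d \in \Lambda_{b}}(K + d)$ iterates to $b^{m} K = \bigcup_{w}(K + w)$ for an explicit set of $(n^{2}+1)^{m}$ Gaussian integers $w$. Since $K$ is bounded and $|b|^{m} \to \infty$, a density argument shows that as $m$ grows these translates of $K$ cover arbitrarily large regions of $\mathbb{C}$; in the limit every $z \in \mathbb{C}$ admits a decomposition $z = w + \zeta$ with $w \in \mathbb{Z}[i]$ and $\zeta \in K$. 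Expanding $w$ by the Gaussian integer case yields the finite list of positive-power digits, and reading off the digits of $\zeta$ from its series representation supplies the negative-power tail, assembling to the desired two-sided expansion.
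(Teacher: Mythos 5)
The paper offers no proof of this statement; it is imported verbatim from Katai and Szabo, so there is nothing internal to compare against. Your outline follows the standard route to the result (complete residue system, greedy division in $\mathbb{Z}[i]$, norm contraction, then passage from $\mathbb{Z}[i]$ to $\mathbb{C}$ via the attractor $K$), but it contains one essential gap and two repairable ones.

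The essential gap is the step you yourself flag as ``the main obstacle'': showing the division map $T$ has no periodic orbit in $\mathbb{Z}[i]$ other than the fixed point $0$. This cannot be deferred, because it is the actual content of the theorem and the only place where the hypothesis that $b$ has \emph{negative} real part can enter. Every other ingredient you use --- the residue count, the estimate $|T(z)|\leq(|z|+n^{2})/|b|$, the IFS construction --- applies verbatim to $b=n+i$, for which the statement is false: in base $1+i$ with digits $\{0,1\}$ one computes $T(i)=i$, so $i$ has no finite expansion, and more generally only the bases $-n\pm i$ among Gaussian integers with digit set $\{0,\ldots,|b|^{2}-1\}$ give number systems. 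A proof sketch that never uses the sign of the real part cannot be completed as written. Your identity $(1-b^{p})z=\sum_{j=0}^{p-1}d(T^{j}z)b^{j}$ is the correct starting point (it shows any periodic point is the negative of an element of $K$ with periodic digits), but the base-specific elimination of nonzero cycles, uniform in $n$, still has to be carried out; that is where the theorem lives.

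Two lesser points. First, for odd $n$ the elements $b$ and $\bar{b}$ are not coprime (both are divisible by $1+i$), so ``$b\mid(j-k)$ and $\bar{b}\mid(j-k)$, hence $b\bar{b}\mid(j-k)$'' is invalid as stated. The conclusion survives by a direct computation: for a rational integer $m$ one has $m/b=m\bar{b}/(n^{2}+1)=(-mn-mi)/(n^{2}+1)$, which lies in $\mathbb{Z}[i]$ if and only if $(n^{2}+1)\mid m$; since $|j-k|\leq n^{2}<n^{2}+1$, distinctness of residues follows. Second, in the last stage, ``the translates of $K$ cover arbitrarily large regions, so in the limit they cover $\mathbb{C}$'' is not a proof: you have not shown that $b^{m}K$ contains balls of growing radius about a fixed point (that $K$ has nonempty interior, or that its $\mathbb{Z}[i]$-translates cover the plane, is essentially equivalent to what is being proved). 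The standard repair is compactness: for each $m$ write $b^{m}z=g_{m}+\delta_{m}$ with $g_{m}\in\mathbb{Z}[i]$ and $|\delta_{m}|\leq 1/\sqrt{2}$, expand $g_{m}$ by the integer case, observe that the integer parts of $b^{-m}g_{m}$ are bounded independently of $m$ and hence constant, say equal to $I$, along a subsequence, and use closedness of $K$ to conclude $z-I\in K$.
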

The expansions are called \textit{radix expansions}. The set $\Lambda$ implicitly depends on the base $b$. Our convention will be to not include $b$ in the notation. This is because we only consider a single Gaussian integer base at a time in all our discussions and wish to keep our notation simple. 

\begin{definition} 
Let $b = -n+i$ where $n$ is a positive integer. Suppose $D$ is a nonempty subset of $\Lambda$. We call the set 
\begin{equation}
C_{D} := \bigg\{\sum_{k\geq1}d_{k}b^{-k}\in\mathbb{C} : d_{k}\in D\bigg\}
\end{equation}
the \textit{base-$b$ restricted digit Cantor set with digit set $D$}. 
\end{definition}

We again omit any indication of the base $b=-n+i$ for the same reason the base $b$ is omitted from the notation $\Lambda$. As in Section~\ref{section:realCase}, we maintain this terminology even when $D$ is a singleton or equal to $\Lambda$. The set $C_{D}$ is notably not a Cantor set in either of these extreme cases. 

Consider the following two facts about base-$b$ restricted digit Cantor sets. 
\begin{lemma}\label{lem:restrictedSimilar} 
The base-$b$ restricted digit set $C_{D}$ is the attractor of the IFS $\mathcal{F} = \{z\mapsto\frac{z+d}{b}:d\in D\}$.
\end{lemma}

\begin{proof}
Since $|b| > 1$, the maps in $\mathcal{F}$ are contractions. The equation 
\begin{equation}
C_{D} = \cup_{f\in\mathcal{F}} f(C_{D})
\end{equation} 
can be verified directly. Let us explain why $C_{D}$ is compact. We first endow $D$ with the discrete topology. Observe that $D^\mathbb{N}$ with the product topology is a compact space. The coding map $\pi:D^{\mathbb{N}} \to C_{D}$ is equivalent to the map $(d_{k})_{k\geq1}\mapsto\sum_{k\geq1}d_{k}b^{-k}$ in this context. To see this, observe that for $f_{d_{1}}, f_{d_{2}}, \ldots, f_{d_{m}}\in\mathcal{F}$ we have
\begin{equation}
f_{d_{1}} \circ f_{d_{2}} \circ \cdots \circ f_{d_{m}}(0) = \sum_{k=1}^m d_{k}b^{-k}. 
\end{equation}
This map is both continuous and surjective. Since the image of a compact set under a continous map is compact, we see that $C_{D}$ is compact. Theorem \ref{thm:hutchinson} identifies $C_{D}$ as the attractor. 
\end{proof}

\begin{lemma} \label{lem:digitSep} 
Let $b = -n+i$ with $n \geq 2$ and suppose $D \subset \Lambda$ satisfies the condition that for all $d, d^{'}\in D$, we have $|d-d^{'}| \neq 1$. Every element of $C_{D}$ has a unique radix expansion that only uses digits in $D$. 
\end{lemma}

We postpone the proof of this lemma to after our definition for $\times b$-invariance (Definition~\ref{def:timesb}). We can now argue that the corresponding iterated function system of a restricted digit set with sufficiently separated digits satisfies the strong separation condition.

\begin{proposition} \label{prop:strongSeparation} 
Let $b = -n+i$ where $n \geq 2$ and suppose $D \subset \Lambda$ is nonempty and satisfies the condition that for all $d, d^{'}\in D$, we have $|d-d^{'}| \neq 1$. The iterated function system $\mathcal{F}$ corresponding to $C_{D}$ satisfies the strong separation condition. That is, if $f_{1}, f_{2}\in\mathcal{F}$ are distinct, then $f_{1}(C_{D}) \cap f_{2}(C_{D}) = \emptyset$. 
\end{proposition}

\begin{proof}
Let $z, w$ be elements of $C_{D}$. By Lemma~\ref{lem:digitSep}, there exist unique radix expansions for $z$ and $w$ of the form 
\begin{align}
z = \sum_{k\geq1}a_{k}b^{-k} \\ 
w = \sum_{k\geq1}c_{k}b^{-k} 
\end{align}
respectively. The digits $a_{k}$ and $c_{k}$ are elements of $D$ for every $k$. Suppose $f_{1}, f_{2}$ are distinct maps in $\mathcal{F}$. There exists $d_{1}, d_{2}\in D$, with $d_{1} \neq d_{2}$, such that $f_{1}(z) = \frac{z+d_{1}}{b}$ and  $f_{2}(w) = \frac{w+d_{2}}{b}$. It follows that 
\begin{align}
f_{1}(z) = d_{1}b^{-1} + \sum_{k\geq1}a_{k}b^{-(k+1)}, \\ 
f_{2}(z) = d_{2}b^{-1} + \sum_{k\geq1}c_{k}b^{-(k+1)}.
\end{align}
Since $d_{1}$ and $d_{2}$ are also in $D$, it follows that these radix expansions are respectively the unique radix expansions for $f_{1}(z)$ and $f_{2}(z)$ that only use digits in $D$. The fact that $d_{1}\neq d_{2}$ ensures that the expansions are not the same and thus cannot represent that same complex number. We conclude that $f_{1}(C_{D}) \cap f_{2}(C_{D}) = \emptyset$. 
\end{proof}

Using the language of Definition~\ref{def:contractInvariant}, the sets $C_{D}$, when $D$ is sufficiently separated, contain $\times |b|^{-1}$-invariant sets and are $\times |b|^{-1}$-invariant sets themselves. The following definition classifies this particular case of invariance. 

\begin{definition} \label{def:timesb} 
Let $b = -n+i$ with $n \geq 2$ and suppose $D \subset \Lambda$ is nonempty and satisfies the condition that for all $d, d^{'}\in D$, we have $|d-d^{'}| \neq 1$. A nonempty closed subset $Y\subset C_{D}$ is called $\times b$-\textit{invariant} if it is $\times |b|^{-1}$-invariant. 
\end{definition}

\begin{example} 
The restricted digit Cantor set $C_{D}$ is $\times b$-invariant if the digit set $D$ satisfies $|d-d^{'}|\neq1$ for all $d, d^{'}\in D$. 
\end{example}

This concludes what is required to state and prove our main theorem (Theorem~\ref{thm:main}). The remainder of this section presents the proof of Lemma~\ref{lem:digitSep}. To prove Lemma~\ref{lem:digitSep}, we first explain when a radix expansion of a complex number is not unique. In other words, when the preimage of a complex number under the coding map $\pi:D^{\mathbb{N}}\to C_{D}$ is not a singleton. We begin by introducing new notation. 
It is convenient to use the notation 
\begin{equation}\label{eq:radix}
(d_{\ell},d_{\ell-1},\ldots,d_{0};d_{-1},\ldots)
\end{equation} 
for a radix expansion with digits $d_{k}\in D$. In the discussions that follow this always refers to an expansion in base $b=-n+i$. We use the notation $d_{\ell}d_{\ell-1}\cdots d_{0}.d_{-1}\cdots$ to denote the complex number $\sum_{k=-\infty}^{\ell}d_{k}b^{k}$ represented by (\ref{eq:radix}). The point that we would call the decimal point, if this was an expansion in base ten, is called the \textit{radix point}. We refer to the digits to the left of the radix point $(d_{\ell},d_{\ell-1},\ldots d_{0};)$ as the \textit{integer part} of the expansion. The complex number represented by the integer part of a radix expansion is the Gaussian integer $d_{\ell}b^{\ell} + d_{\ell-1}b^{\ell-1} + \cdots + d_{0}$.

Radix expansions of complex numbers, like expansions of real numbers in an integer base, are not unique. In fact, it is shown in \cite{G82} that there can be as many as three different radix expansions in the same base for the same complex number. A result of Gilbert in \cite{G82} places a necessary and sufficient condition on a pair of equivalent radix expansions. We require the following notation to state it. 

Let $p = (p_{\ell}, p_{\ell-1}, \ldots, p_{0};p_{-1},\ldots)$ be a radix expansion and let $k$ be an integer. We denote the Gaussian integer represented by the integer part of the radix expansion $(p_{\ell}, p_{\ell-1}, \ldots, p_{k};$ $p_{k-1},\ldots)$ by $p(k)$. 

\begin{lemma}\label{lem:lemstates} [W. J. Gilbert, \cite{G82}, proposition 1] 

Let $n$ be a postive integer. Two radix expansions, $q$ and $r$, represent the same complex number in base $b = -n+i$ if and only if, for all integers $k$, either

\begin{itemize}\label{lem:states}
\item[(i)] $q(k)-r(k) \in \{0, \pm1, \pm(n+i), \pm(n-1+i)\}$ when $n\neq 2$, or
\item[(ii)] $q(k)-r(k) \in \{0, \pm1, \pm(2+i), \pm(1+i), \pm i, \pm (2+2i)\}$ when $n = 2$. 
\end{itemize}
\end{lemma}

This lemma can be used to deduce what expansions are possible for complex numbers that have multiple radix expansions. It is also through this analysis that it can be shown that a complex number has at most three representations in base $b=-n+i$. We restrict ourselves to the case that $n\geq 2$. Enforcing that pairs of digits are not distance $1$ apart in the case of $b=-1+i$ implies that $C_{D}$ is a singleton.  

In \cite{G82}, Gilbert derives a state graph that governs triples of radix expansions that represent the same complex number. We present the exposition needed to derive and parse the graph. 

Suppose $p, q$ and $r$ are radix expansions of the same complex number. We do not assume that they are distinct. We define the $k$th state of $p, q$ and $r$ to be the triple 
\begin{equation}
S(k) := (p(k)-q(k), q(k)-r(k), r(k)-p(k)).
\end{equation}
Notably, since the sum of these components is zero, one of the components is redundant. Nonetheless, it is useful to express all the differences explicitly in order to determine the digits at the $k$th place of the expansions $p, q,$ and $r$. We describe how to do this now. 

If $p = (p_{\ell},p_{\ell-1},\ldots p_{0};p_{-1},\ldots)$, then $p(k+1)$ is the Gaussian integer with radix expansion $(p_{\ell},p_{\ell-1},\ldots,p_{k+1};)$. Therefore we have $p(k) = bp(k+1) + p_{k}$. It follows that $p(k) - q(k) = p_{k}-q_{k}+b(p(k+1)-q(k+1))$. We can capture this as a relationship between states with the equation  
\begin{equation}\label{eq:states}
S(k) = (p_{k}-q_{k}, q_{k}-r_{k}, r_{k}-p_{k}) + bS(k+1).
\end{equation}
Therefore the knowledge of the value of $S(k+1)$ can be used with Lemma~\ref{lem:states} to determine the possible values for the digits $p_{k}, q_{k}$, and $r_{k}$ and the state $S(k)$. 

If we treat allowable states as nodes, we can contruct the graph. The directed edges indicate what states $S(k)$ can be achieved from a given state $S(k+1)$ (the node you are currently at). The graph in Figure~\ref{fig:radix} corresponds to the cases $n\geq3$ where $b=-n+i$. The case $n=2$ is more complicated and is presented in Appendix~\ref{app:b}. Both graphs feature a system of diagrams that communicate the value of a state. We describe the system for the case $n\geq3$ here. The additional states present in the case $n=2$ can be found in Appendix~\ref{app:b}. 

We begin with a system of diagrams that communicate the value of $p(k)-q(k)$. The system is as follows:

\begin{enumerate}

\item[(i)]  $p(k)-q(k) = 0$ corresponds to \begin{tikzpicture}\draw (0,0) rectangle node{pq} (.75,.75); \end{tikzpicture}.
\item[(ii)] $p(k)-q(k) = 1$ corresponds to  \begin{tikzpicture}\draw (0,0) rectangle node{q} (0.75,0.75); \draw (0.75, 0.75) rectangle  node{p} (1.5, 0); \end{tikzpicture}.

\item[(iii)] $p(k)-q(k) = n-1+i$ corresponds to  \begin{tikzpicture}\draw (0,0) rectangle node{p} (0.75,0.75); \draw (0, 0) rectangle  node{q} (.75, -.75); \end{tikzpicture}.
\item[(iv)] $p(k)-q(k) = n+i$ corresponds to  \begin{tikzpicture}\draw (0,0) rectangle node{q} (0.75,0.75); \draw (0.75, 0.75) rectangle  node{p} (1.5, 1.5); \end{tikzpicture}.
\end{enumerate}

\begin{figure} [p!]
\centering
\vspace*{-3em}
\hspace*{-5.5em}
\begin{tikzpicture}
\begin{scope}[every node/.style]
     \node (A) at (0, 7) {\begin{tikzpicture}\draw (0,0) rectangle node{pqr} (.75,.75); \end{tikzpicture}}; 
    \node (B) at (-5.5,3.5) {\begin{tikzpicture}\draw (0,0) rectangle node{pq} (0.75,0.75); \draw (0.75, 0.75) rectangle  node{r} (1.5, 0); \end{tikzpicture}}; 
    \node (C) at (-5.5,-3.5) {\begin{tikzpicture}\draw (0,0) rectangle node{pq} (0.75,0.75); \draw (0.75, 0.75) rectangle node{r} (0, 1.5); \end{tikzpicture}}; 
    \node (D) at (-2.5,0) {\begin{tikzpicture}\draw (0,0) rectangle node{r} (0.75,0.75); \draw (0.75, 0.75) rectangle node{pq} (1.5, 1.5); \end{tikzpicture}}; 
    \node (E) at (5.5,3.5) {\begin{tikzpicture}\draw (0,0) rectangle node{r} (0.75,0.75); \draw (0.75, 0.75) rectangle node{pq} (1.5, 0); \end{tikzpicture}}; 
    \node (F) at (5.5,-3.5) {\begin{tikzpicture}\draw (0,0) rectangle node{r} (0.75,0.75); \draw (0.75, 0.75) rectangle node{pq} (0, 1.5); \end{tikzpicture}}; 
    \node (G) at (2.5, 0) {\begin{tikzpicture}\draw (0,0) rectangle node{pq} (0.75,0.75); \draw (0.75, 0.75) rectangle node{r} (1.5, 1.5); \end{tikzpicture}}; 

     \node (H) at (-7,-7) {\begin{tikzpicture}\draw (0,0) rectangle node{p} (0.75,0.75); \draw (0.75, 0.75) rectangle node{r}(0, 1.5);\draw (0, 0) rectangle node{q} (-0.75, 0.75); \end{tikzpicture}}; 
    \node (I) at (-7, -10) {\begin{tikzpicture}\draw (0,0) rectangle node{q} (0.75,0.75); \draw (0.75, 0.75) rectangle  node{p} (0, 1.5);\draw (0, 0) rectangle node{r} (-0.75, 0.75); \end{tikzpicture}}; 
    \node (J) at (-9.5,-8.5) {\begin{tikzpicture}\draw (0,0) rectangle node{r} (0.75,0.75); \draw (0.75, 0.75) rectangle node{q} (0, 1.5); \draw (0, 0) rectangle node{p} (-0.75, 0.75); \end{tikzpicture}}; 
    \node (K) at (7, -7) {\begin{tikzpicture}\draw (0,0) rectangle node{r} (0.75,0.75);\draw (0.75, 0.75) rectangle node{p} (0, 1.5); \draw (0.75, 0.75) rectangle node{q} (1.5, 1.5); \end{tikzpicture}}; 
    \node (L) at (7, -10) {\begin{tikzpicture}\draw (0,0) rectangle node{p} (0.75,0.75); \draw (0.75, 0.75) rectangle node{q}(0, 1.5);  \draw (0.75, 0.75) rectangle node{r} (1.5, 1.5); \end{tikzpicture}}; 
    \node (M) at (9.5, -8.5) {\begin{tikzpicture}\draw (0,0) rectangle node{q} (0.75,0.75); \draw (0.75, 0.75) rectangle node{r} (0, 1.5);  \draw (0.75, 0.75) rectangle node{p} (1.5, 1.5); \end{tikzpicture}}; 
   
\end{scope}

\begin{scope}[>={Stealth[black]},
              every node/.style={fill=white},
              every edge/.style={draw=black}]
    \path [->] (A) edge[loop above]  node[above, fill=none]{$\scriptsize\begin{matrix}
0 \\
0 \\
0 \\
\end{matrix}$+} (A);
    \path [->] (A) edge node[above, fill=none]{$\scriptsize\begin{matrix}
0 \\
0 \\
1 \\
\end{matrix}$+}(B);
    \path [->] (A) edge node[above, fill=none]{$\scriptsize\begin{matrix}
1 \\
1 \\
0 \\
\end{matrix}$+}(E);
    \path [->] (B) edge node[left, fill=none]{$\scriptsize\begin{matrix}
1 \\
0 \\
2n \\
\end{matrix}$+}(H);
    \path [->] (E) edge node[right, fill=none]{$\scriptsize\begin{matrix}
2n-1 \\
2n \\
0 \\
\end{matrix}$+}(K);
    \path [->] (C) edge node[near end, below, yshift=-0.25cm,fill=none]{$\scriptsize\begin{matrix}
0 \\
1 \\
n^2-2n+2 \\
\end{matrix}$+}(K); 
    \path [->] (F) edge node[near end, below, yshift=-0.25cm, fill=none]{$\scriptsize\begin{matrix}
n^2-2n+2 \\
n^2-2n+1 \\
0 \\
\end{matrix}$+}(H);
    \path [->] (H) edge node[right, fill=none]{$\scriptsize\begin{matrix}
2n-1\\
0 \\
n^2 \\
\end{matrix}$}(I);
    \path [->] (I) edge node[below left, fill=none]{$\scriptsize\begin{matrix}
n^2 \\
2n-1 \\
0 \\
\end{matrix}$}(J);
    \path [->] (J) edge node[above, fill=none]{$\scriptsize\begin{matrix}
0 \\
n^2 \\
2n-1 \\
\end{matrix}$}(H);
    \path [->] (K) edge node[left, fill=none]{$\scriptsize\begin{matrix}
n^2-2n+1 \\
n^2 \\
0 \\
\end{matrix}$}(L);
    \path [->] (L) edge node[below right, fill=none]{$\scriptsize\begin{matrix}
0 \\
n^2-2n+1 \\
n^2 \\
\end{matrix}$}(M);
    \path [->] (M) edge node[above, fill=none]{$\scriptsize\begin{matrix}
n^2 \\
0 \\
n^2-2n+1 \\
\end{matrix}$}(K);
   
\end{scope}

\begin{scope}[>={Stealth[red]}, 
              every node/.style={fill=white},
              every edge/.style={draw=red, thick}]

    \path [->] (C) edge node[right, fill=none]{$\scriptsize\begin{matrix}
0 \\
0 \\
n^2-2n+1 \\
\end{matrix}$+}(D);
 \path [->] (B) edge node[near end, right, yshift=0.5cm, fill=none]{$\scriptsize\begin{matrix}
0 \\
0 \\
2n-1 \\
\end{matrix}$+}(C);
\path [->] (B) edge node[above, fill=none]{$\scriptsize\begin{matrix}
0 \\
0 \\
2n \\
\end{matrix}$+}(G);
 \path [->] (E) edge node[near end, left, yshift=0.5cm, fill=none]{$\scriptsize\begin{matrix}
2n-1 \\
2n-1 \\
0 \\
\end{matrix}$+}(F);
\path [->] (C) edge[transform canvas={yshift=1mm}] node[above, fill=none]{$\scriptsize\begin{matrix}
0 \\
0 \\
n^{2}-2n+2 \\
\end{matrix}$+} (F);
   \path [->] (F) edge[transform canvas={yshift=-1mm}] node[below, fill=none]{$\scriptsize\begin{matrix}
n^2-2n+2 \\
n^2-2n+2 \\
0 \\
\end{matrix}$+} (C);
 \path [->] (E) edge node[above, fill=none]{$\scriptsize\begin{matrix}
2n \\
2n \\
0 \\
\end{matrix}$+}(D);
 \path [->] (F) edge node[left, fill=none]{$\scriptsize\begin{matrix}
n^2-2n+1 \\
n^2-2n+1 \\
0 \\
\end{matrix}$+}(G);  
   \path [->] (G) edge node[below right, fill=none]{$\scriptsize\begin{matrix}
0 \\
0 \\
n^2 \\
\end{matrix}$}(E);
 \path [->] (D) edge node[below left, fill=none]{$\scriptsize\begin{matrix}
n^2 \\
n^2 \\
0 \\
\end{matrix}$}(B); 
\end{scope}
\end{tikzpicture}
\caption{The graph governing equivalent radix expansions in base $-n+i$ for $n\geq3$.}
\label{fig:radix}
\end{figure}

Swapping the positions of $p$ and $q$ in any of these arrangements flips the sign on the value of $p(k)-q(k)$. We can use this system to represent the mutual differences between $p(k), q(k)$ and $r(k)$ simultaneously. For example, the state $(1, -n-i, n-1+i)$ is communicated by 
$$\begin{tikzpicture}\draw (0,0) rectangle node{p} (0.75,0.75); \draw (0.75, 0.75) rectangle node{r}(0, 1.5);\draw (0, 0) rectangle node{q} (-0.75, 0.75); \end{tikzpicture}.$$
Each edge of the state graph is labelled with a triple of integers. These indicate a combination of digits, read from top to bottom, that $p_{k}$, $q_{k}$, and $r_{k}$ can be in order for (\ref{eq:states}) to hold. The indication of a ``$+$" symbol means that we may add the integer $t$ to each of the values, where $t$ can be $0, 1,\ldots$ up to the largest integer for which all three of the listed numbers, when shifted by $t$, are less than or equal to $n^{2}=|b|^{2}-1$. Therefore the integers listed along the edges in the state graph communicate the distances between the digits at that index. 

\begin{theorem}\label{thm:rules} [W. J. Gilbert, \cite{G82}, theorem 5] 
Let $p, q$, and $r$ be three radix expansions in base $-n+i$ with $n\geq3$. These expansions represent the same complex number if and only if they can be obtained from an infinite path through the state graph in Figure~\ref{fig:radix} starting at state $(0, 0, 0)$, if necessary relabelling $p, q$ and $r$. 
\end{theorem}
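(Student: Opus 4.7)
The plan is to prove both directions of the equivalence by tracking the sequence of states $S(k)$ defined in equation~(\ref{eq:states}) as $k$ descends, and to show that the admissible transitions between consecutive states are exactly the labelled directed edges of the graph in Figure~\ref{fig:radix}.

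For the ``only if'' direction, suppose $p$, $q$, $r$ represent the same complex number. Since each radix expansion has a finite integer part, for $k$ larger than the maximum of the three integer-part lengths we have $p(k)=q(k)=r(k)=0$, so $S(k)=(0,0,0)$; this identifies where the path starts. Applying Lemma~\ref{lem:lemstates}, each pairwise difference $p(k)-q(k)$, $q(k)-r(k)$, and $r(k)-p(k)$ lies in the nine-element set $\{0,\pm 1,\pm(n+i),\pm(n-1+i)\}$. The admissible values of $S(k)$ are then the triples in that set whose three components sum to zero; after identifying triples that differ by a permutation of the labels $p,q,r$ (which explains the ``if necessary relabelling'' clause), these are exactly the nodes drawn in the figure. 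For each admissible state $S(k+1)$ the relation
\[
S(k)-bS(k+1)=(p_k-q_k,\,q_k-r_k,\,r_k-p_k)
\]
together with the digit constraint $p_k,q_k,r_k\in\{0,1,\ldots,n^2\}$ singles out a finite list of possible successors $S(k)$ along with permissible digit triples. Verifying that this case analysis reproduces precisely the labelled edges and the ``$+t$'' shift convention is the main technical step.

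For the ``if'' direction, given an infinite path through the state graph starting at $(0,0,0)$, I read off the digit triples from the edges to define $p_k$, $q_k$, $r_k$ at each index, padding with zeros for indices preceding the path. By construction equation~(\ref{eq:states}) holds at every $k$, so by descending induction from the starting node, the first coordinate of $S(k)$ equals $p(k)-q(k)$, and analogously for the other pairs. In particular $|p(k)-q(k)|$ is uniformly bounded by a constant $M$ depending only on the finite state set, hence
\[
|p-q|\leq|p(k)-q(k)|\,|b|^{k}+\sum_{j<k}|p_j-q_j|\,|b|^{j}\leq M|b|^{k}+\frac{n^{2}|b|^{k}}{|b|-1}\longrightarrow 0
\]
as $k\to-\infty$, and similarly $q-r\to 0$, confirming that $p$, $q$, and $r$ represent the same complex number.

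The main obstacle is the finite-but-lengthy case analysis in the ``only if'' direction: each candidate node must be examined, and for each one the equation above solved over the allowed digit range to catalogue every admissible successor state and digit triple. The burden is substantially reduced by exploiting the $S_3$ symmetry on the labels $\{p,q,r\}$ and the sign symmetry visible in Lemma~\ref{lem:lemstates}, which together collapse many cases into equivalent ones and expose the near-symmetric structure of the state graph that makes the cataloguing tractable.
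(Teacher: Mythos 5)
Your proposal coincides with how the paper handles this result: the theorem is quoted from Gilbert, and the paper's own justification is the derivation in Appendix~\ref{app:a}, which argues exactly as you do for the ``only if'' direction --- $S(k+1)=(0,0,0)$ for all sufficiently large $k$ because integer parts are finite, and then equation~(\ref{eq:states}) combined with the constraint of Lemma~\ref{lem:lemstates} is iterated to enumerate successor states and digit triples, with relabelling symmetry used to collapse cases. Like the paper, you leave the exhaustive catalogue of nodes and edges unexecuted, so neither account is a complete verification of Figure~\ref{fig:radix}; note also that the nodes should be characterized as the zero-sum triples \emph{reachable} from $(0,0,0)$ under this iteration, not a priori as all zero-sum triples with entries in the seven-element set of Lemma~\ref{lem:lemstates} --- the two collections happen to agree, but that is an output of the enumeration rather than an input. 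What you add that the paper does not is an explicit argument for the ``if'' direction: reading digit triples off a path, establishing $S(k)=(p(k)-q(k),\,q(k)-r(k),\,r(k)-p(k))$ by descending induction, and letting $k\to-\infty$ in the identity $p-q=(p(k)-q(k))b^{k}+\sum_{j<k}(p_j-q_j)b^{j}$ to conclude $p=q$ (and likewise for the other pairs). That convergence argument is correct, since $|b|>1$ and the finitely many states bound $|p(k)-q(k)|$ uniformly, and it is a worthwhile supplement to the paper's one-directional sketch.
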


We include the derivation of figure~\ref{fig:radix} in the Appendix \ref{app:a}. A similar theorem statement also holds for base $-2+i$ and is included in Appendix \ref{app:b} (Theorem~\ref{thm:rules2}). The descriptions that follow pertain to Figure~\ref{fig:radix}. 

If a complex number has a unique radix expansion in base $-n+i$, with $n\geq3$, then $p=q=r$ and this triple is perpetually in the state $(0, 0, 0)$. Complex numbers with precisely two distinct radix expansions correspond to paths that eventually exit the initial state $(0, 0, 0)$ but remain in the bolded red subgraph that does not distinguish between $p$ and $q$. Complex numbers with three distinct radix expansions eventually exit the initial state $(0, 0, 0)$ and ultimately are trapped in one of the two loops of period three at the bottom of the diagram.

We provide an example to illustrate how to read the graph. 
\begin{example}
The complex number $\frac{-23-10i}{17}$ has the following three radix expansions in base $b=-3+i$:
\begin{equation*}
\begin{split}
p &=(0;\overline{4,0,9,}) , \\
q &= (1;\overline{9,4,0,}), \\
r & = (1,5,5;\overline{0,9,4,}). \\
\end{split}
\end{equation*}
The bar over the digits to the right of the radix point indicates a repetition of those digits with period three. The path that this number corresponds to in the state graph is the path that moves along the states

\begin{tikzpicture}
\begin{scope}[every node/.style]
    \node (A) at (0, 0) {\begin{tikzpicture}\draw (0,0) rectangle node{pqr} (.75,.75); \end{tikzpicture}};
    \node (B) at (2, 0) {\begin{tikzpicture}\draw (0,0) rectangle node{pq} (0.75,0.75); \draw (0.75, 0.75) rectangle  node{r} (1.5, 0); \end{tikzpicture}};
    \node (C) at (4, 0) {\begin{tikzpicture}\draw (0,0) rectangle node{pq} (0.75,0.75); \draw (0.75, 0.75) rectangle node{r} (0, 1.5); \end{tikzpicture}};
    \node (K) at (6, 0) {\begin{tikzpicture}\draw (0,0) rectangle node{r} (0.75,0.75);\draw (0.75, 0.75) rectangle node{p} (0, 1.5); \draw (0.75, 0.75) rectangle node{q} (1.5, 1.5); \end{tikzpicture}};
    \node (L) at (8.5, 0) {\begin{tikzpicture}\draw (0,0) rectangle node{p} (0.75,0.75); \draw (0.75, 0.75) rectangle node{q}(0, 1.5);  \draw (0.75, 0.75) rectangle node{r} (1.5, 1.5); \end{tikzpicture}};
    \node (M) at (11, 0) {\begin{tikzpicture}\draw (0,0) rectangle node{q} (0.75,0.75); \draw (0.75, 0.75) rectangle node{r} (0, 1.5);  \draw (0.75, 0.75) rectangle node{p} (1.5, 1.5); \end{tikzpicture}};
   
\end{scope}

\begin{scope}[>={Stealth[black]},
              every node/.style={fill=white},
              every edge/.style={draw=black}]
    \path [->] (A) edge (B);
    \path [->] (B) edge (C);
    \path [->] (C) edge (K);
    \path [->] (K)  edge (L);
    \path [->] (L)  edge (M);
    \path [->] (M) edge[bend right] (K); 
\end{scope}
\end{tikzpicture}.

This path also captures the complex number $\frac{-108+24i}{17} = 21.\overline{409} = 22.\overline{904} = 176.\overline{094}$. The distances between pairs of coefficients of the same power of $b$ is the same as those in the previous triple of expansions. 
\end{example}
A list of interesting observations about the state graph can be found in~\cite{G82}. We state an additional observation.

\begin{corollary}\label{cor:pm1} 
Suppose $x$ and $y$ are two distinct radix expansions of the same complex number in base $-n+i$ where $n\geq2$. Let $k\in\mathbb{Z}$ be the first index at which a pair of digits $x_{k}$ and $y_{k}$ are not equal. Then $x_{k} - y_{k} = \pm 1$. 
\end{corollary}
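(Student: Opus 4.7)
The plan is to apply Lemma~\ref{lem:states} directly to the pair $x$ and $y$, without needing to traverse the full state graph of Figure~\ref{fig:radix}. Take $q = x$ and $r = y$ in the lemma; since these two radix expansions represent the same complex number in base $b = -n+i$, the lemma tells us that for every integer $j$ the Gaussian integer $x(j) - y(j)$ lies in the finite set $\{0, \pm 1, \pm(n+i), \pm(n-1+i)\}$ when $n \geq 3$, or in the analogous set $\{0, \pm 1, \pm(2+i), \pm(1+i), \pm i, \pm(2+2i)\}$ when $n = 2$.

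First I would confirm that the index $k$ in the statement is well-defined. Every radix expansion has only finitely many nonzero digits to the left of the radix point, so there is an $N$ above which both $x$ and $y$ have zero digits. The set of indices at which $x_j \neq y_j$ is therefore bounded above, and is nonempty since $x \neq y$; it has a maximum, which is the index $k$ in the statement. For every $j > k$ we have $x_j = y_j$, and consequently $x(k+1) - y(k+1) = 0$.

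Next, I would invoke the recurrence $p(k) = b\, p(k+1) + p_k$ (the same one used to derive equation~(\ref{eq:states})) applied to both $x$ and $y$ and subtracted, to get
$$x(k) - y(k) = (x_k - y_k) + b\bigl(x(k+1) - y(k+1)\bigr) = x_k - y_k.$$
Lemma~\ref{lem:states} then forces $x_k - y_k$ to lie in the finite set listed above (or its $n=2$ analogue). Because $x_k$ and $y_k$ are elements of $\Lambda_b \subset \mathbb{Z}$, their difference is a real integer; the only real integers appearing in either set are $-1$, $0$, and $+1$. Since $x_k \neq y_k$ by the choice of $k$, we conclude $x_k - y_k = \pm 1$.

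I do not expect a serious obstacle here: the state graph machinery has already done the heavy lifting through Lemma~\ref{lem:states}, and the corollary amounts to observing that the constraints on $x(k) - y(k)$ collapse to a one-digit statement exactly at the highest index of disagreement. The only point worth a sentence of care is the well-definedness of $k$, which is handled by the finiteness of the integer part of any radix expansion.
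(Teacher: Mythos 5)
Your proof is correct, but it takes a genuinely different and more economical route than the paper. The paper proves this corollary by inspecting the state graph of Figure~\ref{fig:radix}: it splits into the case of exactly two distinct expansions (path confined to the red subgraph, first divergence at the edge leaving $(0,0,0)$) and the case of three distinct expansions (first divergence either at $(0,0,0)$ or at one of the four edges entering a period-three loop), checks the digit labels on each relevant edge, and then asserts that a parallel inspection of the $n=2$ graph in Appendix~\ref{app:b} gives the same conclusion. You bypass the graph entirely: since $k$ is the maximal index of disagreement (well-defined because the integer parts are finite), you have $x(k+1)=y(k+1)$, so the recurrence behind equation~(\ref{eq:states}) collapses to $x(k)-y(k)=x_k-y_k$; Lemma~\ref{lem:lemstates} then confines this difference to Gilbert's finite set, and since a difference of digits is a real integer, the only admissible nonzero values are $\pm1$. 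This is shorter, needs no case split on the number of equivalent expansions, and treats $n=2$ and $n\geq3$ uniformly in one line (all the non-real elements of both sets are excluded automatically), whereas the paper's argument leans on the graph it has just built and defers the $n=2$ case to an unwritten parallel inspection. The one point worth keeping explicit, which you do address, is that ``first index'' means the largest index of disagreement under the convention that radix expansions are read with decreasing index, so that the disagreement set is bounded above and the maximum exists.
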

\begin{proof}
The analysis that follows corresponds to the graph in Figure~\ref{fig:radix} governing radix expansions in base $-n+i$ for base $n\geq3$. A similar analysis can be done for the graph governing the case $n=2$ in Appendix~\ref{app:b} (Figure \ref{fig:radix2}). 

If $x$ and $y$ are the only distinct radix expansions of the complex number they represent, then they correspond to a path that, eventually, leaves the initial state $(0, 0, 0)$ and then remains in the bolded red subgraph of Figure~\ref{fig:radix}. Without loss of generality, we label $p = q = x$ and $r = y$. The first instance that an entry of $r$ differs from that of $p$ is when the path leaves the state $(0, 0, 0)$. From the graph, we see that the pair of digits between $r$ and $p$ differ by $\pm1$ at that index of the radix expansions. 

If $x$ and $y$ are two of three distinct radix expansions, then the path they correspond to ultimately enters, and is trapped, in one of the two loops of period three at the bottom of the diagram. If either $x$ or $y$ fit the role of $r$, then the expansions again differ for the first time when they leave state $(0, 0, 0)$. If neither $x$ or $y$ can be assigned the role of $r$, then, without loss of generality, let $x=p$ and $y=q$. The two expansions differ at a change of state that enters one of the two loops of period three. There are four of these edges and they all indicate that the digits of $p$ and $q$ differ by $\pm1$. 
\end{proof}

We now prove Lemma~\ref{lem:digitSep}. 

\begin{proof} [Proof of Lemma~\ref{lem:digitSep}] \label{pf:digitSep} 
Suppose $z\in C_{D}$. By definition, $z$ has a radix expansion $q$ that only uses digits in $D$. By corollary~\ref{cor:pm1}, any other radix expansion of $z$, if one exists, must use a digit that differs by $\pm1$ from a digit in $q$. The separation condition on $D$ implies that this digit must not be in $D$. It follows that $q$ is unique. 
\end{proof}

\section{\textbf{Proof of Main Result}} \label{section:mainResult} 

We restate Theorem \ref{thm:earlymain}. 

\begin{theorem}\label{thm:main} 
Let $b=-n+i$ with $n\geq2$ and assume $D\subset\Lambda := \{0, 1,\ldots, |b|^{2}-1\}$ is nonempty and satisfies $|d-d^{'}| \neq 1$  for all $d, d^{'}\in D$. Let $\pi: D^{\mathbb{N}}\rightarrow C_{D}$ be the coding map. If $A\subset D^{\mathbb{N}}$ is a subshift, then 
\begin{itemize}
\item[(i)]$\pi(A) = \{\sum_{k=1}^{\infty}a_{k}b^{-k}:(a_{k})_{k\geq1}\in A\}$ is $\times b$-invariant, and
\item[(ii)]$\dim_{B}\pi(A) = \frac{\mathcal{E}(A)}{\log{|b|}}.$
\item[(iii)] If $Y\subset C_{D}$ is a $\times b$-invariant set, then $\dim_{H}Y = \dim_{B}Y.$
\end{itemize}
\end{theorem}

We split the proof into two parts. We first prove the two claims concerning subshifts of $D^{\mathbb{N}}$. We require the following lemmas. 

\begin{lemma}\label{lem:hausbound} 
Let $b=-n+i$ with $n\geq2$ and let $D\subset\Lambda$ be nonempty. Fix a positive integer $m$. There exists a bound, independent of $m$, on the number of $m$-tiles (defined by $\{z\mapsto\frac{z+d}{b}:d\in D\}$) that any ball with radius less than or equal to $|b|^{-m}\diam{C_{D}}$ intersects.
\end{lemma}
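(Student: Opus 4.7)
The plan is to exploit the fact that every $m$-tile can be written as $b^{-m}(g + S)$ for some Gaussian integer $g \in \mathbb{Z}[i]$, since
\begin{equation*}
0.d_1 d_2 \cdots d_m + b^{-m}S \;=\; b^{-m}\bigl((d_1 b^{m-1} + d_2 b^{m-2} + \cdots + d_m) + S\bigr),
\end{equation*}
and each $d_j \in \Lambda_b \subset \mathbb{Z}[i]$. Under the rescaling $z \mapsto b^m z$, the family of $m$-tiles becomes precisely the family of $\mathbb{Z}[i]$-translates of the single compact set $S$, and a ball of diameter $|b|^{-m}\diam(S)$ becomes a ball of diameter $\diam(S)$. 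So after rescaling, the problem reduces to an $m$-independent statement: a ball of diameter at most $\diam(S)$ meets only boundedly many Gaussian-integer translates of $S$.

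First I would verify the key triangle-inequality estimate. Suppose $B$ has $\diam(B) \leq |b|^{-m}\diam(S)$ and intersects both $T_j := b^{-m}(g_j + S)$ for $j = 1, 2$. Pick $p_j \in B \cap T_j$ and write $p_j = b^{-m}(g_j + s_j)$ with $s_j \in S$. Since $|p_1 - p_2| \leq \diam(B)$,
\begin{equation*}
|g_1 - g_2| \;\leq\; |b|^m |p_1 - p_2| + |s_1 - s_2| \;\leq\; \diam(S) + \diam(S) \;=\; 2\diam(S).
\end{equation*}
Hence every Gaussian integer $g$ labelling an $m$-tile that meets $B$ lies within distance $2\diam(S)$ of any one such $g$, so all such labels sit inside a single closed disc of radius $2\diam(S)$ in $\mathbb{C}$.

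To conclude, I would observe that the number of Gaussian integers contained in any closed disc of radius $2\diam(S)$ is bounded by a constant $C$ depending only on $\diam(S)$ (hence only on $n$), and in particular independent of $m$; a crude bound like $C \leq (4\diam(S) + 2)^2$ suffices. Since $S$ is bounded, distinct Gaussian integers yield distinct translates $g + S$, hence distinct $m$-tiles, so $B$ meets at most $C$ distinct $m$-tiles. The only real subtlety is the triangle-inequality step: one must notice that the hypothesis $\diam(B) \leq |b|^{-m}\diam(S)$ is calibrated so that the factor $|b|^m$ cancels the factor $|b|^{-m}$ coming from the scale of the tiles, producing a bound independent of $m$; everything else is a routine lattice-point count.
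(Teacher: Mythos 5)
Your proposal is correct and follows essentially the same route as the paper's proof: rescale by $b^{m}$ so that $m$-tiles become Gaussian-integer translates of $S$ and the ball acquires diameter at most $\diam{S}$, then bound the number of admissible translates by a triangle-inequality lattice-point count. Your write-up is, if anything, slightly more explicit than the paper's about the uniform constant $C$ and the cancellation of the $|b|^{\pm m}$ factors.
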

\begin{proof}
We remark that the diameter of an $m$-tile is $|b|^{-m}\diam{C_{D}}$. 

First consider the following. Let $\delta > 0$ and let $w\in\mathbb{C}$. By Theorem~\ref{thm:radixExistence}, the complex number $w$ is in the set $C_{\Lambda}+\zeta$ for some Gaussian integer $\zeta$. We claim that there exists a bound on the number of sets of the form $C_{D}+g$ that intersect $B_{\delta}(w)$, independent of $\zeta$. Suppose $\zeta = 0$. If $g+z\in B_{\delta}(w)$, then it follows that 
\begin{equation}
|g| < \delta/2 + |w-z| \leq \delta/2 + \diam{C_{\Lambda}}.
\end{equation}
Therefore there exists an integer $M$ such that at most $M$ sets of the form $C_{D}+g$ intersect $B_{\delta}(w)$. This bound holds for every $w \in C_{\Lambda}+\zeta$ for any $\zeta$. If it did not we could translate back to the origin and realize a contradiction. 

Let $M$ specifically be the maximum number of translates of $C_{D}$ by Gaussian integers that a ball of radius $\diam{C_{D}}$ can intersect. If a ball with radius less than or equal to $|b|^{-m}\diam{C_{D}}$ intersects more than $M$ $m$-tiles, then we can scale all the $m$-tiles and the ball by $b^{m}$ to obtain a ball of diameter less than or equal to $\diam{C_{D}}$ that intersects more than $M$ translates of $C_{D}$. It follows that $M$ is the desired bound. 
\end{proof}

A version of the following lemma was stated in \cite{PS21} (lemma 5.2). 

\begin{lemma} \label{lem:boxtile}
Let Y be a nonempty subset of a restricted digit Cantor set $C_{D}$. For a fixed integer $m\geq1$, let $N_{m} (Y)$ denote the smallest number of $m$-tiles needed to cover Y. Then the box-counting dimension of Y exists if and only if $\lim_{m\rightarrow\infty}\frac{\log{N_{m}(Y)}}{m\log{|b|}}$ exists, and, if so, this limit is the box-counting dimension of Y.
\end{lemma}

\begin{proof}
Let $N_{\delta}(Y)$ be the smallest number of sets of diameter $\delta$ needed to cover $Y$. Let $K_{\delta}(Y)$ be the minimum number of closed balls needed to cover $Y$. If the box-counting dimension $\dim_{B}Y$ exists then it is known (section 3.1 in \cite{F90}) that
\begin{equation}
\dim_{B}Y = \lim_{\delta\to0}\frac{\log N_{\delta}(Y)}{-\log\delta} = \lim_{\delta\to0}\frac{\log K_{\delta}(Y)}{-\log\delta}.
\end{equation}
Consider the sequence $(\delta(m))_{m\geq1}$ with $\delta(m) = |b|^{-m}\diam{C_{D}}$ . It follows from their definitions that $N_{\delta(m)}(Y)\leq N_{m}(Y)$. Let $M$ be the bound from Lemma~\ref{lem:hausbound}. The intersection of any ball of radius $\delta(m)$ with $C_{D}$ can be covered by $M$ $m$-tiles. This implies that $N_{m}(Y)\leq MK_{\delta(m)}(Y)$. 
These inequalities yield
\begin{equation}
\frac{\log N_{\delta(m)}(Y)}{-\log\delta(m)} \leq \frac{\log N_{m}(Y)}{m\log|b|} \leq \frac{\log(MK_{\delta(m)}(Y))}{-\log\delta(m)}.
\end{equation}
If we assume that $\dim_{B}(Y)$ exists, then taking $m\to\infty$ implies that $\lim_{m\to\infty}\frac{\log N_{m}(Y)}{m\log|b|} = \dim_{B}Y$. 

To achieve the converse, let $\delta > 0$. Since we are interested in the limiting behaviour as $\delta\to0$, we can further assume $\delta < 1$ with no loss of generality. There exists $m(\delta)$ such that $|b|^{-m(\delta)}\diam{C_{D}}<\delta\leq|b|^{-m(\delta)+1}\diam{C_{D}}$. This means that an $m(\delta)$ tile can be covered by a single closed ball of radius $\delta$. In addition, we have the inequality $N_{m(\delta) + 1}(Y)\leq MK_{\delta}(Y)$ from the preceding discussion. This provides us with the inequalities
\begin{equation} \label{ineq:tileCounting}
\frac{\log[\frac{1}{M}N_{m(\delta)+1}(Y)]}{-\log\delta}\leq \frac{\log K_{\delta}(Y)}{-\log\delta} \leq \frac{\log N_{m(\delta)}(Y)}{-\log\delta}.
\end{equation}
The right hand side of the inequality is equal to $\frac{\log N_{m(\delta)}(Y)}{m(\delta)\log|b|} \frac{m(\delta)\log|b|}{-\log\delta}$. The limiting behaviour of this quantitiy as $\delta\to0$ (choose an arbitrary sequence) is equivalent to the limiting behaviour of $\frac{\log N_{m(\delta)}(Y)}{m(\delta)\log|b|}$ since 
\begin{equation}
1 \leq \frac{m(\delta)\log|b|}{-\log\delta} \leq \frac{m(\delta)}{m(\delta)-1}\Bigg(1-\frac{\log(\diam{C_{D}})}{\log\delta}\Bigg). 
\end{equation}
A similar analysis can be done with the left-hand side of (\ref{ineq:tileCounting}). Therefore $\dim_{B}Y$ exists if $\lim_{m\to\infty} \frac{\log N_{m}(Y)}{m\log|b|}$ does. 
\end{proof}


\begin{theorem}\label{thm:top} 
Let $b=-n+i$ with $n\geq2$ and assume $D\subset\Lambda$ is nonempty and satisfies $|d-d^{'}|\neq1$ that for any $d, d^{'}\in D$. Let $\pi:D^{N}\to C_{D}$ be the coding map. If $A\subset D^{\mathbb{N}}$ is a subshift, then 
\begin{itemize}
\item[(i)]$\pi(A)$ is $\times b$-invariant, and
\item[(ii)]$\dim_{B}\pi(A) = \frac{\mathcal{E}(A)}{\log{|b|}}.$
\end{itemize}
\end{theorem}
\begin{proof}
By Proposition~\ref{prop:strongSeparation}, it follows from the condition $|d-d^{'}|\neq1$ for all $d, d^{'}\in D$ that $\mathcal{F} = \{z\mapsto \frac{z+d}{b}: d\in D\}$ satisfies the strong separation condition. Lemma \ref{lem:restrictedSimilar} establishes $C_{D}$ as the attractor. By Lemma~\ref{lem:entropyIFS}, it follows that $\pi(A)\subset C_{D}$ is $\times |b|^{-1}$-invariant. This is precisely what we mean by $\times b$-invariance (Definition~\ref{def:timesb}). 
By combining the second part of Lemma~\ref{lem:entropyIFS} with Lemma~\ref{lem:boxtile} we obtain
\begin{equation}
\frac{\mathcal{E}(A)}{\log|b|} = \lim_{m\to\infty} \frac{\log{N_{m}}(g(A))}{m\log|b|} = \dim_{B}\pi(A).
\end{equation}
\end{proof}


To complete this section, we show that the Hausdorff and box-counting dimensions of a $\times b$-invariant set are equal. We follow Furstenberg's strategy in \cite{F67} (proof of proposition III.1). 

It is known that $\dim_{H}V \leq \dim_{B}V$ for any $V\subset\mathbb{R}^{n}$ whenever the box-counting dimension exists (section 3.1 \cite{F90}). Given a $\times b$-invariant set $Y$, we need to show that $\dim_{B}Y \leq \dim_{H}Y$. The main idea is to take advantage of the correspondence between the $m$-tiles of $C_{D}$ and the cylinder sets in the coding space $D^{\mathbb{N}}$. To this end, we express the claim $\dim_{B}Y \leq \dim_{H}Y$ in terms of $m$-tiles. This is accomplished by Lemma \ref{lem:leqHaus} and Lemma \ref{lem:semigroupTrans}. 

\begin{lemma} \label{lem:leqHaus} 
Let $b=-n+i$ with $n\geq2$  and assume $D\subset\Lambda$ is nonempty and satisfies $|d-d^{'}|\neq1$ for any $d, d^{'}\in D$. Let $Y\subset C_{D}$ be a $\times b$-invariant set. The inequality $\dim_{B}Y \leq \dim_{H}Y$ holds if the following implication holds: whenever $Y\subset \cup_{k=1}^{K}T_{k}$ and $s < \dim_{B}Y$, where $\{T_{k}\}_{k=1}^{K}$ is a finite collection of $m_{k}$-tiles, we have $\sum_{k=1}^{K}|b|^{-sm_{k}} \geq 1$.
\end{lemma}

\begin{proof}
Let $\{S_{k}\}$ be a $\delta$-cover of $Y$ by $m_{k}$-tiles. The set $\pi^{-1}(Y)$ is a closed subset of the compact space $D^{\mathbb{N}}$ and thus it is compact. The one-to-one correspondence between cylinder sets and $m$-tiles allows us to extract a finite subcover $\{T_{k}\}_{k=1}^{K}$ from $\{S_{k}\}$. By assumption, if $s < \dim_{B}Y$, we have $\sum_{k=1}^{K}|b|^{-sm_{k}} \geq 1$. Therefore $\sum_{k=1}^{\infty}(\diam S_{k})^{s} \geq (\diam{C_{D}})^{s} > 0$.

It follows that $\mathcal{T}^{s}(Y) > 0$ (recall Definition \ref{def:tileMeasure}). By Lemma \ref{lem:hausbound} there is a uniform bound on the number of $m$-tiles a ball of radius less than or equal to $|b|^{-m}\diam{C_{D}}$ can intersect. This means we satisfy the conditions needed to apply Lemma \ref{lem:francescowasright}. This lemma allows us to use $\mathcal{T}^{s}(Y)$ in place of $\mathcal{H}^{s}(Y)$. It follows that $s\leq \dim_{H}Y$. Since we can do this for every $s<\dim_{B}Y$, it must be that $\dim_{B}Y\leq\dim_{H}Y$. 
\end{proof}

Let us recall a construction from Section \ref{section:realCase}. Let $L = \cup_{n\geq1}D^{n}$. This is the set of all finite words that can be written using the digits in $D$. Given a subshift $A\subset D^{\mathbb{N}}$, let $R$ denote the subset of $L$ containing those tuples which occur as finite subwords of sequences in $A$. The set $\mathcal{L}_{n}(A)$ can be viewed as the elements of $R$ of length $n$. The set $R$ is a semigroup under concatenation. Let us say that a word $\rho$ \textit{divides} a word $\rho^{'}$ if $\rho^{'} = \rho\rho_{1}$ for some $\rho_{1}\in L$. 

\begin{lemma} \label{lem:semigroupTrans} 
Let $b=-n+i$ with $n\geq2$. Let $Y\subset C_{D}$ be a $\times b$-invariant set. The following implications are equivalent. 
\begin{itemize}
\item[(i)] If $Y\subset \cup_{k=1}^{K}T_{k}$ and $s < \dim_{B}Y$, where $\{T_{k}\}$ is a finite collection of $m_{k}$-tiles, then $\sum_{k=1}^{K}|b|^{-sm_{k}} \geq 1$.
\item[(ii)] If there exists a finite collection $\{\rho_{k}\}_{k=1}^{K}\subset R$ such that whenever $\rho\in R$ is of sufficient length, there exists $k=1,2,\ldots, K$ such that $\rho_{k}$ divides $\rho$ and $s < \dim_{B}Y$, then $\sum_{k=1}^{K}|b|^{-sl(\rho_{k})} \geq 1$.
\end{itemize}
\end{lemma}

\begin{proof}
To see that $\dim_{B}Y$ exists, observe that Lemma \ref{lem:entropyIFS} implies that $A_{Y} := \pi^{-1}(Y) \subset D^{\mathbb{N}}$ is a subshift satisfying $\pi(A_{Y}) = Y$ where $\pi$ is the coding map. The existence of $\dim_{B}Y$ then follows from Theorem \ref{thm:top}. 

Suppose (i) holds and that $W = \{\rho_{k}\}_{k=1}^{K}\subset R$ satisfies the division propert in (ii).  For each $\rho_{k}= d_{1}^{(k)}d_{2}^{(k)}\cdots d_{m_{k}}^{(k)}\in W$, let $T_{k} = (C_{D})_{d_{1}^{(k)}, d_{2}^{(k)},\ldots, d_{m_{k}}^{(k)}}$. Note that $m_{k} = l(\rho_{k})$. For each $y\in Y$, there is a sequence $(y_{k})_{k\geq1}\subset D^{\mathbb{N}}$ such that $y = 0.y_{1}y_{2}\ldots$ since $Y\subset C_{D}$. Suppose $N$ is large enough that $y_{1}y_{2}\cdots y_{N}$ is divisible by some $\rho_{k}\in W$. It follows by the contruction of $T_{k}$ that $y\in T_{k}$ and thus $Y\subset\cup_{k=1}^{K}T_{k}$. 

If $s<\dim_{B}Y$, then $\sum_{k=1}^{K}|b|^{-sl(\rho_{k})} = \sum_{k=1}^{K}|b|^{-sm_{k}}\geq1$. Therefore (i) implies (ii). The proof for the other direction is similar and so we omit it. 
\end{proof}

\begin{theorem} \label{thm:dimEq} 
Let $b=-n+i$ with $n\geq2$ and assume $D\subset\Lambda$ is nonempty and satisfies $|d-d^{'}|\neq1$ for all $d, d^{'}\in D$.
If $Y\subset C_{D}$ is a $\times b$-invariant set, then $\dim_{H}Y = \dim_{B}Y.$
\end{theorem}

\begin{proof}
 It is known that $\dim_{H}E \leq \dim_{B}E$ for any $E\subset\mathbb{R}^{n}$ whenever the box-counting dimension exists (see \cite{F90}). Recall that Lemma \ref{lem:entropyIFS} implies that $A_{Y} := \pi^{-1}(Y) \subset D^{\mathbb{N}}$ is a subshift satisfying $\pi(A_{Y}) = Y$ where $\pi$ is the coding map. The existence of $\dim_{B}Y$ then follows from Theorem \ref{thm:top}. 

What is left to show is that $\dim_{B}Y \leq \dim_{H}Y$. Let $s < \dim_{H}Y$. Our strategy is to satisfy the conditions of Lemma \ref{lem:leqHaus}. The condition that has to be shown is that whenever $Y\subset \cup_{k=1}^{K}T_{k}$, where $\{T_{k}\}$ is a finite collection of $m_{k}$-tiles, we have $\sum_{k=1}^{K}|b|^{-sm_{k}}\geq1$. By way of contradiction, let $\{T_{k}\}$ be a finite cover of $Y$ by $m_{k}$-tiles and suppose $\sum_{k=1}^{K}|b|^{-sm_{k}}<1$. 

By Lemma \ref{lem:semigroupTrans} this is equivalent to the statement that there exists $\{\rho_{k}\}\subset R$ such that whenever $\rho\in R$ is of sufficient length, it is divisible by $\rho_{k}$ for some $k$, but $\sum_{k=1}^{K}|b|^{-sl(\rho_{k})}<1$. 

It follows from Lemma \ref{lem:semigroup} that $\sum_{\rho\in R}|b|^{-sl(\rho)}$ converges. 

On the other hand, Theorem \ref{thm:top} tells us that $\dim_{B}Y$ is equal to $\frac{\mathcal{E}(A_{Y})}{\log{|b|}}$ where $\mathcal{E}(A_{Y})$ is the topological entropy of $A_{Y}$. Since $s < \dim_{B}Y$, we have $s < \frac{\log{|\mathcal{L}_{m}(A_{Y})|}}{m\log{|b|}}$ for all sufficiently large $m$. Therefore $|\mathcal{L}_{m}(A_{Y})||b|^{-sm} > 1$ for all sufficiently large $m$. 

Observe that we can rewrite the sum $\sum_{R}|b|^{-sl(\rho)}$ by enumerating over the elements of $R$ in increasing length to obtain $\sum_{m=1}^{\infty}|\mathcal{L}_{m}(A_{Y})||b|^{-sm}$. The latter series diverges to $+\infty$. We have established our contradiction. If $s < \dim_{B}Y$, then we must have $\sum_{k=1}^{K}|b|^{-sl(\rho_{k})} \geq 1$.

Through the equivalence stated in Lemma \ref{lem:semigroupTrans}, we have satisfied the conditions of Lemma \ref{lem:leqHaus}. This completes the proof. 
\end{proof}

\section{\textbf{Results Concerning The Dimension of $C_{D}\cap (C_{D}+z)$}} \label{section:intersectionResults}

In this final section we discuss an application of Proposition \ref{prop:strongSeparation} to compute the box-counting dimension of an intersection of a restricted digit Cantor set with a translate of itself. 

Let $C_{D}$ be a base-$b$ restricted digit Cantor set and consider the set $F = \{z\in\mathbb{C}: C_{D} \cap (C_{D} + z)\neq \emptyset \}$. Given $\beta\in[0,1]$, we define $F_{\beta}$ to be those $z\in F$ for which $\dim_{B}(C_{D} \cap (C_{D}+z)) = \beta\dim_{B}C_{D}$. Pedersen and Shaw showed in \cite{PS21} that $F_{\beta}$ is dense in $F$ under certain conditions on the digit set $D$. We quote the primary results. Recall that while the box-counting dimension does not always exist, the lower box counting dimension always exists. We denote the lower box-counting dimension by $\underline{\dim}_{B}$. It is defined by replacing the limit in Definition \ref{def:boxcounting} with the lower limit. 

\begin{theorem}\label{thm:extremeSep}[S. Pedersen, V. Shaw, \cite{PS21}, theorem 7.4] 
Let $b = -n+i$ with $n\geq2$. Suppose that $D\subset\Lambda$ satisfies $d\leq n^{2}/2$ for all $d\in D$ and $|a-a^{'}|\geq n+1$ for all distinct pairs $a, a^{'}\in\Delta := D-D$. We have
\begin{equation}
\underline{\dim}_{B}(C_{D} \cap (C_{D}+z)) = \liminf_{m\to\infty}\frac{\log G_{m}(z)}{m\log|b|}
\end{equation}
where 
\begin{equation}
G_{m}(z) := |D\cap(D+z_{1})||D\cap(D+z_{2})|\cdots|D\cap(D+z_{m})|
\end{equation}
and $z = 0.z_{1}z_{2}\ldots$ with $z_{k}\in\Delta$. 
\end{theorem}

\begin{corollary} \label{cor:denseTranslates}[S. Pedersen, V. Shaw, \cite{PS21}, corollary 7.5]
Assume the hypotheses of Theorem \ref{thm:extremeSep}. The set $F_{\beta}$ is dense in $F$ for any $\beta\in[0, 1]$. 
\end{corollary}

The separation condition $|a-a^{'}|\geq n+1$ imposed on $\Delta$ is used to ensure that the sets $0.a_{1}a_{2}\ldots a_{m} + b^{-m}C_{D}$ and $0.a_{1}^{'}a_{2}^{'}\ldots a_{k}^{'} + b^{-k}C_{D}$ where $a_{k}\in \Delta$ are disjoint if $a_{k}\neq a_{k}^{'}$ for at least one $k$ (\cite{PS21}, lemma 7.1). Using Proposition \ref{prop:strongSeparation}, we can achieve this with the separation condition $|a-a^{'}|\neq1$. 

\begin{lemma} \label{lem:minimalDisjointCondition}
Let $b = -n+i$ with $n\geq2$. Suppose that $D\subset\Lambda$ satisfies $d\leq n^{2}/2$ for all $d\in D$ and $|a-a^{'}|\geq n+1$ for all distinct pairs $a, a^{'}\in\Delta$. Suppose $a_{k}, a_{k}^{'}\in\Delta$ for $k=1,2,\ldots, k$ and $a_{k}\neq a_{k}^{'}$ for at least one $k$ . Then the sets $(C_{D})_{a_{1}, a_{2}, \ldots, a_{m}}$ and $ (C_{D})_{a_{1}^{'}, a_{2}^{'}, \ldots, a_{m}^{'}}$ are disjoint. 
\end{lemma}
\begin{proof}
We follow the strategy in the proof of lemma 7.1 in \cite{PS21}. Let $d_{\text{max}}$ be the maximum element of $D$. Let $E = \Delta + d_{\text{max}}$, $e_{k} = a_{k} + d_{\text{max}}$, and $e_{k}^{'} + d_{\text{max}}$. Observe that since $d\leq n^{2}/2$, we have $E\subset\{0, 1, \ldots, n^{2}\}$. Furthermore, since the new digits $e_{k}$ and $e_{k}^{'}$ are simply translations of elements in $\Delta$, the set $E$ satisfies the separation condition $|e-e^{'}|\neq 1$ for any pair $e, e^{'}\in E$. A direct application of Proposition \ref{prop:strongSeparation} yields that $(C_{D})_{e_{1}, e_{2}, \ldots, e_{m}}  \cap (C_{D})_{e_{1}^{'}, e_{2}^{'}, \ldots, e_{m}^{'}} = \emptyset$
where the contractions are given by $z\mapsto\frac{z+e_{k}}{b}$. If we translate these disjoint images by $0.d_{1}d_{2}\cdots d_{m}$ where $d_{k} := -d_{\text{max}}$ for each $k$, then we obtain the desired result. 
\end{proof}

Lemma \ref{lem:minimalDisjointCondition} can used to extend Theorem \ref{thm:extremeSep}, and consequently Corollary \ref{cor:denseTranslates}, to a larger class of digit sets $D$. 

\begin{theorem}\label{thm:minimalSep} 
Let $b = -n+i$ with $n\geq2$. Suppose that $D\subset\Lambda$ satisfies $d\leq n^{2}/2$ for all $d\in D$ and $|a-a^{'}|\geq n+1$ for all distinct pairs $a, a^{'}\in\Delta$. We have
\begin{equation}\label{eq:lowerIntersectionDim}
\underline{\dim}_{B}(C_{D} \cap (C_{D}+z)) = \liminf_{m\to\infty}\frac{\log G_{m}(z)}{m\log|b|}
\end{equation}
where 
\begin{equation}
G_{m}(z) := |D\cap(D+z_{1})||D\cap(D+z_{2})|\cdots|D\cap(D+z_{m})|
\end{equation}
and $z = 0.z_{1}z_{2}\ldots$ with $z_{k}\in\Delta$. 
\end{theorem}

\begin{corollary} \label{cor:moreDenseTranslates}
Assume the hypotheses of Theorem \ref{thm:minimalSep}. The set $F_{\beta}$ is dense in $F$ for any $\beta\in[0, 1]$. 
\end{corollary}

These results follow from Lemma \ref{lem:minimalDisjointCondition} as Theorem \ref{thm:extremeSep} and Corollary \ref{cor:denseTranslates} do in \cite{PS21} when the separation between distinct elements of $\Delta$ is at least $n+1$. For this reason we only sketch the proofs of Theorem~\ref{thm:extremeSep} and Corollary~\ref{cor:moreDenseTranslates}. 

In order for $C_{D}\cap (C_{D} + z)$ to be nonempty, it must be that $z = \sum_{k\geq1}z_{k}b^{-k}$ where $z_{k}\in\Delta$. For a fixed sequence $(z_{1}, z_{2}, \ldots)$, we use the notation $\lfloor z\rfloor_{m}$ to denote the truncation $0.z_{1}z_{2}\ldots z_{m}$. Lemma \ref{lem:minimalDisjointCondition} can be used to establish that $C_{D}\cap (C_{D} + \lfloor z\rfloor_{m}) = \{0.u_{1}u_{2}\ldots u_{m}:u_{k}\in D\cap (D+z_{k})\} + b^{-k}C_{D}$ and that $C_{D}\cap (C_{D} + z) \subset C_{D}\cap (C_{D} + \lfloor z\rfloor_{m})$ for all $m\geq1$. We then see that the $m$-tiles that cover $C_{D}\cap (C_{D} + z)$ are those whose specified digits lie in $D\cap(D+z_{k})$ for $1\leq k\leq m$. In fact, we have $N_{m}(C_{D}\cap(C_{D}+z)) = G_{m}(z)$, which implies (\ref{eq:lowerIntersectionDim}).

Let us describe how to construct points in $F_{\beta}$ that are arbitrarily close to some point in $F$. Let $0 \leq \beta \leq 1$ and let $z\in F$. Fix a sequence $(z_{1}, z_{2}, \ldots)$ such that $z = 0.z_{1}z_{2}\ldots$ and $z_{k}\in\Delta$. Given $r>0$, there exists $m$ such that any point of the form $w = 0.z_{1}z_{2}\ldots z_{m}w_{m+1}w_{m+2}\ldots$ is within $r$ distance of $z$. The choices made for the tail of the expansion defining $w$ depends on the value of $\beta$. 

If $0 < \beta < 1$, then we choose a sequence of positive integers $(h_{k})_{k\geq1}$ such that $h_{k}\leq k\beta < 1 + h_{k}$. It can be shown that either $h_{k+1} = h_{k}$ or $h_{k+1} = 1+ h_{k}$. For all $k>m$ we set
\begin{equation}
y_{k} = \begin{cases}
d_{\text{max}} - d_{\text{min}} & \;\text{if}\; h_{k} = h_{k-1},\\
0 & \;\text{if}\; h_{k} = 1 + h_{k-1},
\end{cases}
\end{equation}
where $d_{\text{max}}$ and $d_{\text{min}}$ are respectively the maximum and minimum of $D$. If $\beta = 0$ or if $\beta = 1$, respectively choose $y_{k} = d_{\text{max}} - d_{\text{min}}$ or $y_{k} = 0$ for all $k>m$. The condition that $y\in F_{\beta}$ can be checked directly using Theorem \ref{thm:minimalSep}. 

We conclude with an example illustrating Theorem~\ref{thm:minimalSep} for a case not covered by Theorem~\ref{thm:extremeSep}. 
\begin{example}
Let $b=-3+i$. The subsets  $D$ of $\{0, 1,\ldots, 9\}$ for which $d\leq9/2$ for all $d\in D$ are all subsets of $\{0, 1, 2, 3, 4\}$. Under the additional constraint that every pair of elements $a, a^{'}\in\Delta$ satisfies $a-a^{'}\neq1$, then the choices of $D$ with more than one element are $\{0, 2\}$, $\{0, 3\}$, $\{0, 4\}$, $\{1, 3\}$, $\{1, 4\}$, $\{2, 4\}$, and $\{0, 2, 4\}$. If $D$ is only a singleton, then so is $C_{D}$. The box-counting dimension of $C_{D}$ is zero in that case.  

Suppose we choose $D = \{0, 3\}$. Therefore we have $\Delta = \{-3, 0, 3\}$. The intersection of $C_{D}$ and its translation by $z = \frac{21-18j}{-19+26j} = 0.\overline{-303}$ is nonempty. Since $|D\cap(D-3)| = |D\cap(D+3)| = 1$ and $|D| = 2$, it follows that
\begin{equation}
G_{m}(z) = \begin{cases} 2^{m/3} &\;\text{if}\;m \equiv 0 \mod{3},\\
 2^{(m-1)/3} &\;\text{if}\;m \equiv 1 \mod{3},\\
 2^{(m+1)/3} &\;\text{if}\;m \equiv 2 \mod{3}.
\end{cases}
\end{equation}
Therefore $2^{(m-1)/3} \leq G_{m}(z) \leq 2^{(m+1)/3}$ for all $m$. In particular, 
\begin{equation}
\frac{2(m-1)\log{2}}{3m\log{10}} \leq \frac{\log{G_{m}(z)}}{m\log{10}}\leq \frac{2(m+1)\log{2}}{3m\log{10}}.
\end{equation}
By Theorem~\ref{thm:minimalSep}, we conclude that $\dim_{B}(C_{D}\cap(C_{D} + z)) = \frac{2\log{2}}{3\log{10}}$.
\end{example}
The number of sets $D$ that satisfy the conditions of Theorem~\ref{thm:minimalSep} and have more than one element is $7$ when $b=-3+i$. For this same base $b$,  the only non-singleton set $C_{D}$ that Theorem~\ref{thm:extremeSep} makes a statement about is the one corresponding to $D = \{0, 4\}$. The gap between the number of sets $C_{D}$ that are captured by both theorems increases quickly as $n$ increases ($b = -n+i$). We list some numerical data for small $n$ in Table~\ref{tbl:gapSize}.

\begin{table} 
\setlength{\arrayrulewidth}{0.5mm}
\setlength{\tabcolsep}{18pt}
\renewcommand{\arraystretch}{1.5}
\center
\begin{tabular}{|c|c|c|}

 \multicolumn{3}{c}{Number of nontrivial sets $C_{D}$ captured by Theorems~\ref{thm:extremeSep} and~\ref{thm:minimalSep}}\\
 \hline
$n$& Theorem~\ref{thm:extremeSep}&Theorem~\ref{thm:minimalSep}\\
 \hline
 2 & 0 & 1 \\
 3 & 1 & 7 \\ 
 4 & 10 & 58 \\ 
 5 & 29 & 300 \\
 6 & 87 & 2561 \\
 7 & 181 & 19004 \\
 \hline
\end{tabular}
\caption{$b=-n+i$ and $|D| > 1$}
\label{tbl:gapSize}
\end{table}

\nocite{*}
\renewcommand{\bibname}{References}

\bibliographystyle{plain} 
\clearpage
\bibliography{revisedReferences}

\begin{thebibliography}{10}

\bibitem{AT05}
S.~Akiyama and J.~M. Thuswaldner.
\newblock The topological structure of fractal tilings generated by quadratic
  number systems.
\newblock {\em Comput. Math. Appl.}, 49:1439--1485, 2005.

\bibitem{A22}
T.~Austin.
\newblock A new dynamical proof of the {S}hmerkin-{W}u theorem.
\newblock {\em Journal of Modern Dynamics}, 18:1--11, 2022.

\bibitem{DH95}
G.J. Davis and T.-Y. Hu.
\newblock On the intersection of two middle third {C}antor sets.
\newblock {\em Pub. Math.}, 39:43--60, 1995.

\bibitem{F90}
K.~Falconer.
\newblock {\em Fractal Geometry:Mathematical Foundations and Applications}.
\newblock John Wiley and Sons, 1990.

\bibitem{F67}
H.~Furstenberg.
\newblock Disjointedness in ergodic theory, minimal sets, and a problem in
  {D}iophantine approximation.
\newblock {\em Math. Systems Theory}, 1:1--49, 1967.

\bibitem{F70}
H.~Furstenberg.
\newblock {\em Intersections of {C}antor Sets and Transversality of
  Semigroups}, pages 41--59 in \textit{Problems in Analysis}.
\newblock Princeton University Press, 1970.

\bibitem{G81}
W.~J. Gilbert.
\newblock Radix representations of quadratic fields.
\newblock {\em J. Math. Anal. Appl.}, 83:264--274, 1981.

\bibitem{G82}
W.~J. Gilbert.
\newblock Complex numbers with three radix expansions.
\newblock {\em Can. J. Math.}, 34:1335--1348, 1982.

\bibitem{GMR21}
D.~Glasscock, J.~Moreira, and F.~Richter.
\newblock Additive and geometric transversality of fractal sets in the
  integers.
\newblock {\em arXiv preprint arXiv:2007.05480}, 2021.

\bibitem{HS12}
M.~Hochman and P.~Shmerkin.
\newblock Local entropy averages and projections of fractal measures.
\newblock {\em Ann. of Math}, 175:1001--1059, 2012.

\bibitem{H81}
J.~E. Hutchinson.
\newblock Fractals and self similarity.
\newblock {\em Indiana University Mathematics Journal}, 30:713--747, 1981.

\bibitem{KK81}
I.~Katai and B.~Kov\'{a}cs.
\newblock Canonical number systems in imaginary quadratic systems.
\newblock {\em Acta Math. Acad. Sci. Hungar.}, 39:159--164, 1981.

\bibitem{KS74}
I.~Katai and J.~Szabo.
\newblock Canonical number systems for complex integers.
\newblock {\em Acta Sci. Math}, 37:255--260, 1975.

\bibitem{K60}
D.~Knuth.
\newblock An imaginary number system.
\newblock {\em Comm. ACM}, 3:245--247, 1960.

\bibitem{NL02}
F.~Nekka and J.~Li.
\newblock Intersections of triadic {C}antor sets with their rational
  translates.
\newblock {\em Chaos Solitons Fractals}, 13:1807--1817, 2002.

\bibitem{PP12}
S.~Pedersen and J.~D. Philips.
\newblock Intersections of certain deleted digit sets.
\newblock {\em Fractals}, 20:105--115, 2012.

\bibitem{PS21}
S.~Pedersen and V.~T. Shaw.
\newblock Dimension of the intersection of certain {C}antor sets in the plane.
\newblock {\em Opuscula Math}, 41:227--244, 2021.

\bibitem{S19}
P.~Shmerkin.
\newblock On {F}urstenberg’s intersection conjecture, self-similar measures,
  and the $l^{q}$ norms of convolutions.
\newblock {\em Ann. of Math}, 189:319--391, 2019.

\bibitem{W70}
P.~Walters.
\newblock {\em An Introduction to Ergodic Theory}.
\newblock Springer-Verlag, New York, 1982.

\bibitem{W19}
M.~Wu.
\newblock A proof of {F}urstenberg's conjecture on the intersections of $\times
  p$- and $\times q$-invariant sets.
\newblock {\em Ann. of Math}, 189:707--751, 2019.

\end{thebibliography}

\appendix 
\section{Derivation of the State Graph ($n\geq3$)}\label{app:a}
This appendix is a supplement to the discussion of Figure~\ref{fig:radix} in Section~\ref{section:complexCase}. The goal of this appendix is to demonstrate how Lemma~\ref{lem:statesrepeat} translates to the state graph in Figure~\ref{fig:radix}. For convenience, the graph can be found in Figure~\ref{fig:radixrepeat} below and Lemma~\ref{lem:statesrepeat} is simply a repetition of Lemma~\ref{lem:lemstates}. 

Recall that the claim is that any triple of radix expansions in base-(-n+i) represent the same complex number if and only if they can be obtained from an infinite path through the state graph starting from the top node (state). The diagrams for the states and the labelling system for the edges is the same as it is in Section~\ref{section:complexCase}. Given a radix expansion $(d_{\ell}, d_{\ell-1}, \ldots, d_{0};d_{-1}, d_{-2}, \ldots)$, we use the notation $d_{k}$ for the $k$th digit. The notation $d(k)$ is the same as it is in Section~\ref{section:complexCase}, but recalling it is unnecessary. Its meaning can be ignored in the context of the derivation of the state graph. 

\begin{figure} [p!]
\centering
\vspace*{-3em}
\hspace*{-5.5em}
\begin{tikzpicture}
\begin{scope}[every node/.style]
     \node (A) at (0, 7) {\begin{tikzpicture}\draw (0,0) rectangle node{pqr} (.75,.75); \end{tikzpicture}}; 
    \node (B) at (-5.5,3.5) {\begin{tikzpicture}\draw (0,0) rectangle node{pq} (0.75,0.75); \draw (0.75, 0.75) rectangle  node{r} (1.5, 0); \end{tikzpicture}}; 
    \node (C) at (-5.5,-3.5) {\begin{tikzpicture}\draw (0,0) rectangle node{pq} (0.75,0.75); \draw (0.75, 0.75) rectangle node{r} (0, 1.5); \end{tikzpicture}}; 
    \node (D) at (-2.5,0) {\begin{tikzpicture}\draw (0,0) rectangle node{r} (0.75,0.75); \draw (0.75, 0.75) rectangle node{pq} (1.5, 1.5); \end{tikzpicture}}; 
    \node (E) at (5.5,3.5) {\begin{tikzpicture}\draw (0,0) rectangle node{r} (0.75,0.75); \draw (0.75, 0.75) rectangle node{pq} (1.5, 0); \end{tikzpicture}}; 
    \node (F) at (5.5,-3.5) {\begin{tikzpicture}\draw (0,0) rectangle node{r} (0.75,0.75); \draw (0.75, 0.75) rectangle node{pq} (0, 1.5); \end{tikzpicture}}; 
    \node (G) at (2.5, 0) {\begin{tikzpicture}\draw (0,0) rectangle node{pq} (0.75,0.75); \draw (0.75, 0.75) rectangle node{r} (1.5, 1.5); \end{tikzpicture}}; 

     \node (H) at (-7,-7) {\begin{tikzpicture}\draw (0,0) rectangle node{p} (0.75,0.75); \draw (0.75, 0.75) rectangle node{r}(0, 1.5);\draw (0, 0) rectangle node{q} (-0.75, 0.75); \end{tikzpicture}}; 
    \node (I) at (-7, -10) {\begin{tikzpicture}\draw (0,0) rectangle node{q} (0.75,0.75); \draw (0.75, 0.75) rectangle  node{p} (0, 1.5);\draw (0, 0) rectangle node{r} (-0.75, 0.75); \end{tikzpicture}}; 
    \node (J) at (-9.5,-8.5) {\begin{tikzpicture}\draw (0,0) rectangle node{r} (0.75,0.75); \draw (0.75, 0.75) rectangle node{q} (0, 1.5); \draw (0, 0) rectangle node{p} (-0.75, 0.75); \end{tikzpicture}}; 
    \node (K) at (7, -7) {\begin{tikzpicture}\draw (0,0) rectangle node{r} (0.75,0.75);\draw (0.75, 0.75) rectangle node{p} (0, 1.5); \draw (0.75, 0.75) rectangle node{q} (1.5, 1.5); \end{tikzpicture}}; 
    \node (L) at (7, -10) {\begin{tikzpicture}\draw (0,0) rectangle node{p} (0.75,0.75); \draw (0.75, 0.75) rectangle node{q}(0, 1.5);  \draw (0.75, 0.75) rectangle node{r} (1.5, 1.5); \end{tikzpicture}}; 
    \node (M) at (9.5, -8.5) {\begin{tikzpicture}\draw (0,0) rectangle node{q} (0.75,0.75); \draw (0.75, 0.75) rectangle node{r} (0, 1.5);  \draw (0.75, 0.75) rectangle node{p} (1.5, 1.5); \end{tikzpicture}}; 
   
\end{scope}

\begin{scope}[>={Stealth[black]},
              every node/.style={fill=white},
              every edge/.style={draw=black}]
    \path [->] (A) edge[loop above]  node[above, fill=none]{$\scriptsize\begin{matrix}
0 \\
0 \\
0 \\
\end{matrix}$+} (A);
    \path [->] (A) edge node[above, fill=none]{$\scriptsize\begin{matrix}
0 \\
0 \\
1 \\
\end{matrix}$+}(B);
    \path [->] (A) edge node[above, fill=none]{$\scriptsize\begin{matrix}
1 \\
1 \\
0 \\
\end{matrix}$+}(E);
    \path [->] (B) edge node[left, fill=none]{$\scriptsize\begin{matrix}
1 \\
0 \\
2n \\
\end{matrix}$+}(H);
    \path [->] (E) edge node[right, fill=none]{$\scriptsize\begin{matrix}
2n-1 \\
2n \\
0 \\
\end{matrix}$+}(K);
    \path [->] (C) edge node[near end, below, yshift=-0.25cm,fill=none]{$\scriptsize\begin{matrix}
0 \\
1 \\
n^2-2n+2 \\
\end{matrix}$+}(K); 
    \path [->] (F) edge node[near end, below, yshift=-0.25cm, fill=none]{$\scriptsize\begin{matrix}
n^2-2n+2 \\
n^2-2n+1 \\
0 \\
\end{matrix}$+}(H);
    \path [->] (H) edge node[right, fill=none]{$\scriptsize\begin{matrix}
2n-1\\
0 \\
n^2 \\
\end{matrix}$}(I);
    \path [->] (I) edge node[below left, fill=none]{$\scriptsize\begin{matrix}
n^2 \\
2n-1 \\
0 \\
\end{matrix}$}(J);
    \path [->] (J) edge node[above, fill=none]{$\scriptsize\begin{matrix}
0 \\
n^2 \\
2n-1 \\
\end{matrix}$}(H);
    \path [->] (K) edge node[left, fill=none]{$\scriptsize\begin{matrix}
n^2-2n+1 \\
n^2 \\
0 \\
\end{matrix}$}(L);
    \path [->] (L) edge node[below right, fill=none]{$\scriptsize\begin{matrix}
0 \\
n^2-2n+1 \\
n^2 \\
\end{matrix}$}(M);
    \path [->] (M) edge node[above, fill=none]{$\scriptsize\begin{matrix}
n^2 \\
0 \\
n^2-2n+1 \\
\end{matrix}$}(K);
   
\end{scope}

\begin{scope}[>={Stealth[red]}, 
              every node/.style={fill=white},
              every edge/.style={draw=red, thick}]

    \path [->] (C) edge node[right, fill=none]{$\scriptsize\begin{matrix}
0 \\
0 \\
n^2-2n+1 \\
\end{matrix}$+}(D);
 \path [->] (B) edge node[near end, right, yshift=0.5cm, fill=none]{$\scriptsize\begin{matrix}
0 \\
0 \\
2n-1 \\
\end{matrix}$+}(C);
\path [->] (B) edge node[above, fill=none]{$\scriptsize\begin{matrix}
0 \\
0 \\
2n \\
\end{matrix}$+}(G);
 \path [->] (E) edge node[near end, left, yshift=0.5cm, fill=none]{$\scriptsize\begin{matrix}
2n-1 \\
2n-1 \\
0 \\
\end{matrix}$+}(F);
\path [->] (C) edge[transform canvas={yshift=1mm}] node[above, fill=none]{$\scriptsize\begin{matrix}
0 \\
0 \\
n^{2}-2n+2 \\
\end{matrix}$+} (F);
   \path [->] (F) edge[transform canvas={yshift=-1mm}] node[below, fill=none]{$\scriptsize\begin{matrix}
n^2-2n+2 \\
n^2-2n+2 \\
0 \\
\end{matrix}$+} (C);
 \path [->] (E) edge node[above, fill=none]{$\scriptsize\begin{matrix}
2n \\
2n \\
0 \\
\end{matrix}$+}(D);
 \path [->] (F) edge node[left, fill=none]{$\scriptsize\begin{matrix}
n^2-2n+1 \\
n^2-2n+1 \\
0 \\
\end{matrix}$+}(G);  
   \path [->] (G) edge node[below right, fill=none]{$\scriptsize\begin{matrix}
0 \\
0 \\
n^2 \\
\end{matrix}$}(E);
 \path [->] (D) edge node[below left, fill=none]{$\scriptsize\begin{matrix}
n^2 \\
n^2 \\
0 \\
\end{matrix}$}(B); 
\end{scope}
\end{tikzpicture}
\caption{The graph governing equivalent radix expansions in base $-n+i$ for $n\geq3$.}
\label{fig:radixrepeat}
\end{figure}

\begin{lemma}\label{lem:statesrepeat} [W. J. Gilbert, \cite{G82}, proposition 1]

Let $n$ be a postive integer. Two radix expansions, $q$ and $r$, represent the same complex number in base $b = -n+i$ if and only if, for all integers $k$, either

\begin{itemize}\label{lem:statesrepeat}
\item[(i)] $q(k)-r(k) \in \{0, \pm1, \pm(n+i), \pm(n-1+i)\}$ when $n\neq 2$, or
\item[(ii)] $q(k)-r(k) \in \{0, \pm1, \pm(2+i), \pm(1+i), \pm i, \pm (2+2i)\}$ when $n = 2$. 
\end{itemize}
\end{lemma}

We proceed under the assumption that $n\geq3$. We discuss the special case of $n=2$ in Appendix~\ref{app:b}.  In \cite{G82}, Gilbert gives some of the calculations pertaining to the $n=1$ state graph. The derivation of that graph does not exhibit all the reasoning featured in the derivation of the graph governing the cases $n\geq3$. 

Let $p, q,$ and $r$ be radix expansions in base $b=-n+i$. The $k$th state is defined to be $S(k) := (p(k)-q(k), q(k)-r(k), r(k)-p(k))$. It is important to recall that, in this context, the index $k$ ranges over all the integers and the digit $p_{k}$ corresponds to the coefficient of $b^{k}$. 

Although the sum of the components of $S(k)$ is zero, our notation lists them all. This is because we wish to explicitly compute the digits of all three expansions in the $k$th place. We recall (\ref{eq:states}) from Section~\ref{section:complexCase}.
\begin{equation} \label{eq:graphkey}
S(k) = (p_{k}-q_{k}, q_{k}-r_{k}, r_{k}-p_{k}) + bS(k+1).
\end{equation}

It says that the $(k+1)$st state can be used to find the possible values of $S(k)$. Every radix expansion $d$ has a smallest index $\ell$ at which $d_{k} = 0$ for all $k\geq\ell$. Therefore there exists a $k$ for which $p(k+1) = q(k+1) = r(k+1) = 0$ and thus $S(k+1) = (0, 0, 0)$. This state corresponds with the top node of Figure \ref{fig:radixrepeat} with the diagram
$$\begin{tikzpicture}\draw (0,0) rectangle node{pqr} (.75,.75); \end{tikzpicture}.$$
We compute, using Lemma~\ref{lem:statesrepeat}, the possible values of $S(k)$. Each value will correspond to a node in the state graph that is a successor of the node corresponding to $S(k+1) = (0, 0, 0)$. 

Observe that by (\ref{eq:graphkey}) the $k$th state must satisfy $S(k) = (p_{k}-q_{k}, q_{k}-r_{k}, r_{k}-p_{k})$. This forces the components of $S(k)$ to be integers since each digit is an integer. In accordance with Lemma~\ref{lem:statesrepeat}, the components must be $0$ or $\pm1$. This splits into cases. It is that either all three digits are the same ($S(k) = (0, 0, 0)$) or at least one digit differs from the other two. 

The case $S(k) = (0, 0, 0)$ implies the existence of an arrow from the state $(0, 0, 0)$ back to itself. The triple of digits $(p_{k}, q_{k}, r_{k})$ could be any $(a, a, a)$ where $a\in\{0, 1, \ldots, n^{2}\}$. This is indicated by the label on the corresponding edge in the state graph given by 
$$\begin{matrix} 0\\0\\0\end{matrix}+.$$
We proceed with the case of differing digits. The digits cannot all be distinct because this would mean one of the pairs would necessarily have a difference of magnitude greater than or equal to $2$. Without loss of generality, let us say that $r$ is the expansion that differs in the $k$th digit and $p_{k}=q_{k}$. Either $r_{k}$ is one more than $p_{k}$ or one less. We either have $S(k) = (0, -1, 1)$ or $S(k) = (0, 1, -1)$. These states correspond to the diagrams 
$$\begin{tikzpicture}\draw (0,0) rectangle node{pq} (0.75,0.75); \draw (0.75, 0.75) rectangle  node{r} (1.5, 0); \end{tikzpicture}\;\;\text{and}\;\;\begin{tikzpicture}\draw (0,0) rectangle node{r} (0.75,0.75); \draw (0.75, 0.75) rectangle  node{pq} (1.5, 0); \end{tikzpicture}$$ respectively and result in the remaining two edges from the top node in Figure~\ref{fig:radixrepeat}. 

The triples $(p_{k}, q_{k}, r_{k})$ are either of the form $(a, a, a + 1)$ or $(a + 1, a + 1, a)$ where $a\in\{0, 1, \ldots, n^{2}-1\}$. This is indicated by the respective labels 
$$\begin{matrix} 0\\0\\1\end{matrix}+\;\;\text{and}\;\;\begin{matrix} 1\\1\\0\end{matrix}+$$
 on the corresponding edges.

This first step provides the flavour of the calculations that appear in the full derivation of the graph. We compute a second step which will include the possibility that all three of the digits $p_{k}, q_{k}$, and $r_{k}$ are distinct. Let us reindex such that $S(k+1) = (0, 1, -1)$. Again, we refer to (\ref{eq:graphkey}) to direct our calculations. We have
\begin{equation}\label{eq:potentialtriple}
S(k) = (p_{k}-q_{k}, q_{k}-r_{k}, r_{k}-p_{k}) + (0, -n+i, n-i).
\end{equation}
It is clear that, at least one of the digits must differ from the other two. Let us investigate the case of exactly one distinct digit. Without loss of generality we assume $p_{k} = q_{k}$ and $r_{k}\neq p_{k}$. Consider the second component of $S(k)$: $q_{k} - r_{k} - n + i$. 

The digits are integers and thus there is no way of changing the positive imaginary part. According to Lemma~\ref{lem:statesrepeat}, we can choose digits $q_{k}$ and $r_{k}$ such that $q_{k} - r_{k} = 2n$ or $2n-1$. The choice of a difference of $2n$ implies that the third component is $-n-i$, which satisfies Lemma~\ref{lem:statesrepeat}. The resulting state is $S(k) = (0, n+i, -n-i)$. Its corresonding diagram in Figure~\ref{fig:radixrepeat} is 
$$\begin{tikzpicture}\draw (0,0) rectangle node{r} (0.75,0.75); \draw (0.75, 0.75) rectangle  node{pq} (1.5, 1.5); \end{tikzpicture}.$$

The triple of digits $(p_{k}, q_{k}, r_{k})$ is of the form $(2n + a, 2n + a, a)$ where $a\in\{0, 1, \ldots, n^{2}-2n\}$. This is indicated by the label on the corresponding edge given by 
$$\begin{matrix} 2n\\2n\\0\end{matrix}+.$$

If we made the other choice, the resulting state is $S(k) = (0, n-1+i, -n+1-i)$ whose diagram is given by
$$\begin{tikzpicture}\draw (0,0) rectangle node{pq} (0.75,0.75); \draw (0, 0) rectangle  node{r} (.75, -.75); \end{tikzpicture}\;\;\text{and has the label}\;\;\begin{matrix} 2n-1\\2n-1\\0\end{matrix}+$$ on the incoming edge. 

Now we consider the case where all three are different and, in particular, $p_{k}\neq q_{k}$. We can see in (\ref{eq:potentialtriple}) that the first component of $S(k)$ is precisely $p_{k}-q_{k}$. It follows from Lemma~\ref{lem:statesrepeat} that either $p_{k}$ is one more than $q_{k}$ or one less. The expansions $p$ and $q$ have the same digits for all places $k+j$ for all $j\geq1$. We are distinguishing them for the first time. Without loss of generality we may assume $p_{k} = q_{k} - 1$. 

In order for the remaining components of $S(k)$ to obey Lemma~\ref{lem:statesrepeat}, we must have $q_{k} - r_{k} = 2n$ and thus $r_{k} - p_{k} = -2n+1$. The resulting state is $S(k) = (1, n-1+i, -n-i)$ and its corresponding diagram is $$\begin{tikzpicture}\draw (0,0) rectangle node{r} (0.75,0.75);\draw (0.75, 0.75) rectangle node{p} (0, 1.5); \draw (0.75, 0.75) rectangle node{q} (1.5, 1.5); \end{tikzpicture}.$$

The remaining structure of the state graph can be deduced by iterating this procedure until all the successive states are found. We leave this task to the interested reader. 
\newpage
\section{The Other State Graph ($n=2$)}\label{app:b}

This appendix is a supplement to the discussion of Figure~\ref{fig:radix} in Section~\ref{section:complexCase}. Here we present the state graph governing equivalent radix expansions in base $-2+i$. 

In Lemma~\ref{lem:lemstates}, the difference $p(k)-q(k)$ may take on a larger number of values when $n=2$. This increases the number of realizable states and thus complicates the corresponding state graph. The method used to derive the state graph for $n\geq3$ applies in the case $n=2$. We do not include the details. We do include the notation required to parse the diagrams for the new states in the state graph, the primary claim from \cite{G82} about the graph (Theorem~\ref{thm:rules2}), and the graph itself (Figure~\ref{fig:radix2}). The new edges particular to $n=2$ are highlighted in blue and any successor of a blue edge is also a new state particular to the $n=2$ case. 

We make special mention that we only label the edges that correspond to the first distinction between a pair of expansions. The interested reader can 
derive any edge label using the value of the source and successor states of the edge and (\ref{eq:graphkey}). 

Let $p$ and $q$ be two radix expansions in base $-2+i$. We extend the list of diagrams from Section~\ref{section:complexCase} that communicate the value of $p(k)-q(k)$. The additions are as follows:
\begin{enumerate}
\item[(v)]  $p(k)-q(k) = i$ corresponds to \begin{tikzpicture}\draw (0,0) rectangle node{q} (.75,.75); \draw (0, .75) rectangle node{p} (-0.75, 1.5); \end{tikzpicture}.
\item[(vi)] $p(k)-q(k) = 2+2i$ corresponds to  \begin{tikzpicture}\draw (0,0) rectangle node{q} (0.75,0.75); \draw (0, 1.5) rectangle  node{p} (0.75, 2.25); \end{tikzpicture}. 
\end{enumerate}
We can communicate the value of additional states using these diagrams. For example, the state $(-1-i, 1+i, -2-2i)$ is communicated by the diagram
$$\begin{tikzpicture}\draw (0,0) rectangle node{r} (0.75,0.75);\draw (0.75,0.75) rectangle node{q} (0,1.5);\draw (0,1.5) rectangle node{p} (0.75,2.25);\end{tikzpicture}.$$

\begin{theorem}\label{thm:rules2}[W. J. Gilbert, \cite{G82}, theorem 8]
Let $p, q$ and $r$ be three radix expansions in base $-2+i$. These expansions represent the same complex number if and only if they can be obtained from an infinite path through the state graph in Figure~\ref{fig:radix2} starting at state $(0, 0, 0)$, if necessary relabelling $p, q$ and $r$ and in some cases, when $p=q$, replacing $q$ with another expansion. 
\end{theorem}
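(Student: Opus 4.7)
The plan is to mirror the construction outlined in Appendix~\ref{app:a} for the case $n\geq3$, now adapted to the expanded list of admissible differences permitted by part (ii) of Lemma~\ref{lem:statesrepeat}. Throughout, let $p,q,r$ be radix expansions in base $b=-2+i$ and, for each integer $k$, define the state
\[
S(k)=(p(k)-q(k),\,q(k)-r(k),\,r(k)-p(k)).
\]
The central recursion $S(k)=(p_k-q_k,\,q_k-r_k,\,r_k-p_k)+bS(k+1)$, which was already exploited in Appendix~\ref{app:a}, is again the engine of the argument. Because every radix expansion has a largest nonzero digit, there exists $k$ with $S(k+1)=(0,0,0)$, giving the root node of Figure~\ref{fig:radix2}.

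For the ``if'' direction I would verify edge-by-edge that Figure~\ref{fig:radix2} is consistent with the recursion: for each edge from source $S(k+1)$ to successor $S(k)$ with digit-triple label $(p_k,q_k,r_k)$, one checks that the arithmetic $S(k)=(p_k-q_k,q_k-r_k,r_k-p_k)+bS(k+1)$ holds and that every component of $S(k)$ lies in the enlarged set from Lemma~\ref{lem:statesrepeat}(ii). Reading the digit labels along any infinite path from the root then produces three sequences of digits whose induced states all satisfy Lemma~\ref{lem:statesrepeat}(ii); by the ``if'' direction of that lemma, $p$, $q$, and $r$ represent the same complex number.

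For the ``only if'' direction, assume $p,q,r$ represent the same complex number. Lemma~\ref{lem:statesrepeat}(ii) then applies at every index. Starting from $S(k+1)=(0,0,0)$ for all large $k$, I would perform a reverse induction: at each step enumerate all digit triples in $\{0,1,\ldots,4\}^3$ for which the resulting $S(k)$ still satisfies Lemma~\ref{lem:statesrepeat}(ii), and match each such transition with an outgoing edge of Figure~\ref{fig:radix2}. Iterating, the expansions $p,q,r$ trace out a path from the root. The relabelling freedom among $\{p,q,r\}$ is used at the outset to fix a consistent orientation for the asymmetric state diagrams, exactly as in Appendix~\ref{app:a}.

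The principal obstacle, and the genuinely new feature of the $n=2$ case, is the clause ``in some cases, when $p=q$, replacing $q$ with another expansion.'' The additional admissible differences $\pm i$, $\pm(1+i)$, and $\pm(2+2i)$ create blue edges leading to states which are not accessible from the pair-subgraph inherited from the $n\geq3$ picture. Consequently, a complex number with only two distinct radix expansions can, after a few transitions, enter a region of the graph whose nodes formally display three distinct expansions; in such cases the degeneracy $p=q$ must be lifted by choosing a different representative for $q$ whose digits differ by one of the new admissible amounts. Making this rigorous requires a finite case analysis: for each blue node one identifies a candidate replacement expansion from the permitted digit triples and verifies, via the recursion and the bound $p_k,q_k,r_k\in\{0,\ldots,4\}$, that it still represents the same complex number. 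Once this bookkeeping is complete, the enumeration of reachable states terminates because each component of $S(k)$ takes only finitely many values, so the backward induction closes up into the finite graph in Figure~\ref{fig:radix2}.
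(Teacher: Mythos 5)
The first thing to say is that the paper does not prove Theorem~\ref{thm:rules2} at all: it is quoted from Gilbert (\cite{G82}, Theorem 8), and Appendix~\ref{app:b} explicitly states that the details are omitted (``The reasoning we employed to derive the state graph for $n\geq3$ applies in the case $n=2$ as well. We do not include the details.''). Even for $n\geq3$, Appendix~\ref{app:a} only illustrates the first two steps of the derivation and leaves the rest ``to the interested reader.'' So there is no in-paper proof to compare yours against; what you have written is an independent reconstruction of Gilbert's argument. Your overall strategy --- the state recursion $S(k)=(p_k-q_k,q_k-r_k,r_k-p_k)+bS(k+1)$ combined with the two directions of Lemma~\ref{lem:statesrepeat}(ii), rooted at $S(k+1)=(0,0,0)$ for large $k$ --- is the correct and natural one, and it is exactly the scheme the paper sketches in Appendix~\ref{app:a}.

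That said, as a proof your text is a plan rather than an argument: essentially all of the mathematical content of the theorem lives in the finite case analysis, and none of it is carried out. To establish the ``only if'' direction you must enumerate, for every reachable state, all digit triples in $\{0,\ldots,4\}^3$ whose resulting $S(k)$ has all three components in $\{0,\pm1,\pm(2+i),\pm(1+i),\pm i,\pm(2+2i)\}$, and then check that the resulting transition system is \emph{exactly} Figure~\ref{fig:radix2} --- the $n=2$ case is precisely the one where this bookkeeping explodes (twenty-five states versus thirteen), which is why the paper declines to do it. For the ``if'' direction, note that Lemma~\ref{lem:statesrepeat} constrains the cumulative quantities $q(k)-r(k)$, not the digits themselves, so you need the graph to be closed under the recursion (every state reachable along a path already satisfies the constraint); this is true but is again part of the unperformed verification. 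Finally, the clause about ``replacing $q$ with another expansion'' is the genuinely delicate point of Gilbert's Theorem 8, and your treatment of it is impressionistic: a complete proof must identify exactly which paths force the substitution and exhibit the substitute expansion explicitly, whereas you only assert that ``a finite case analysis'' would do this. In short: right skeleton, no flesh; the proposal would become a proof only after the exhaustive state-by-state computation is actually performed.
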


\begin{figure} [h]
\centering
\vspace*{-3em}
\hspace*{-5.5em}
\begin{tikzpicture}
\begin{scope}[every node/.style]
    \node (A) at (0, 9) {\begin{tikzpicture}\draw (0,0) rectangle node{pqr} (.75,.75); \end{tikzpicture}}; 
    \node (B) at (-4.5,6) {\begin{tikzpicture}\draw (0,0) rectangle node{pq} (0.75,0.75); \draw (0.75, 0.75) rectangle  node{r} (1.5, 0); \end{tikzpicture}}; 
    \node (C) at (-4.5,0) {\begin{tikzpicture}\draw (0,0) rectangle node{pq} (0.75,0.75); \draw (0.75, 0.75) rectangle node{r} (0, 1.5); \end{tikzpicture}}; 
    \node (D) at (-2,2.75) {\begin{tikzpicture}\draw (0,0) rectangle node{r} (0.75,0.75); \draw (0.75, 0.75) rectangle node{pq} (1.5, 1.5); \end{tikzpicture}}; 
    \node (E) at (4.5,6) {\begin{tikzpicture}\draw (0,0) rectangle node{r} (0.75,0.75); \draw (0.75, 0.75) rectangle node{pq} (1.5, 0); \end{tikzpicture}}; 
    \node (F) at (4.5,0) {\begin{tikzpicture}\draw (0,0) rectangle node{r} (0.75,0.75); \draw (0.75, 0.75) rectangle node{pq} (0, 1.5); \end{tikzpicture}}; 
    \node (G) at (2, 2.75) {\begin{tikzpicture}\draw (0,0) rectangle node{pq} (0.75,0.75); \draw (0.75, 0.75) rectangle node{r} (1.5, 1.5); \end{tikzpicture}}; 

    \node (H) at (-6,-5.5) {\begin{tikzpicture}\draw (0,0) rectangle node{p} (0.75,0.75); \draw (0.75, 0.75) rectangle node{r}(0, 1.5);\draw (0, 0) rectangle node{q} (-0.75, 0.75); \end{tikzpicture}}; 
    \node (I) at (-6, -9.25) {\begin{tikzpicture}\draw (0,0) rectangle node{q} (0.75,0.75); \draw (0.75, 0.75) rectangle  node{p} (0, 1.5);\draw (0, 0) rectangle node{r} (-0.75, 0.75); \end{tikzpicture}}; 
    \node (J) at (-8.25,-7.75) {\begin{tikzpicture}\draw (0,0) rectangle node{r} (0.75,0.75); \draw (0.75, 0.75) rectangle node{q} (0, 1.5); \draw (0, 0) rectangle node{p} (-0.75, 0.75); \end{tikzpicture}}; 
    \node (K) at (6, -5.5) {\begin{tikzpicture}\draw (0,0) rectangle node{r} (0.75,0.75);\draw (0.75, 0.75) rectangle node{p} (0, 1.5); \draw (0.75, 0.75) rectangle node{q} (1.5, 1.5); \end{tikzpicture}}; 
    \node (L) at (6, -9.25) {\begin{tikzpicture}\draw (0,0) rectangle node{p} (0.75,0.75); \draw (0.75, 0.75) rectangle node{q}(0, 1.5);  \draw (0.75, 0.75) rectangle node{r} (1.5, 1.5); \end{tikzpicture}}; 
    \node (M) at (8.25, -7.75) {\begin{tikzpicture}\draw (0,0) rectangle node{q} (0.75,0.75); \draw (0.75, 0.75) rectangle node{r} (0, 1.5);  \draw (0.75, 0.75) rectangle node{p} (1.5, 1.5); \end{tikzpicture}}; 

    \node (N) at (-9.5, -5) {\begin{tikzpicture}\draw (0,0) rectangle node{q} (0.75,0.75);\draw (0.75,0.75) rectangle node{r} (0,1.5);\draw (0.75,0.75) rectangle node{p} (1.5,0); \end{tikzpicture}}; 
    \node (O) at (-9.5, -11) {\begin{tikzpicture}\draw (0,0) rectangle node{r} (0.75,0.75);\draw (0.75,0.75) rectangle node{q} (0,1.5);\draw (0,1.5) rectangle node{p} (0.75,2.25);\end{tikzpicture}}; 
    \node (P) at (9.5, -5)  {\begin{tikzpicture}\draw (0,0) rectangle node{p} (0.75,0.75);\draw (0.75,0.75) rectangle node{q} (1.5,0);\draw (0.75,0) rectangle node{r} (1.5,-0.75);\end{tikzpicture}}; 
   \node (Q) at (9.5, -11) {\begin{tikzpicture}\draw (0,0) rectangle node{p} (0.75,0.75);\draw (0.75,0.75) rectangle node{q} (0,1.5);\draw (0,1.5) rectangle node{r} (0.75,2.25);\end{tikzpicture}}; 

    \node (R) at (-3, -5.5) {\begin{tikzpicture}\draw (0,0) rectangle node{r} (0.75,0.75);\draw (0.75,0.75) rectangle node{p} (0,1.5);\draw (0.75,0.75) rectangle node{q} (1.5,0); \end{tikzpicture}}; 
    \node (S) at (-1, -5.5) {\begin{tikzpicture}\draw (0,0) rectangle node{p} (0.75,0.75);\draw (0.75,0.75) rectangle node{r} (0,1.5);\draw (0,1.5) rectangle node{q} (0.75,2.25);\end{tikzpicture}}; 
    \node (T) at (-3, -9.25) {\begin{tikzpicture}\draw (0,0) rectangle node{p} (0.75,0.75);\draw (0.75,0.75) rectangle node{q} (0,1.5);\draw (0.75,0.75) rectangle node{r} (1.5,0); \end{tikzpicture}}; 
    \node (U) at (-1, -9.25) {\begin{tikzpicture}\draw (0,0) rectangle node{q} (0.75,0.75);\draw (0.75,0.75) rectangle node{p} (0,1.5);\draw (0,1.5) rectangle node{r} (0.75,2.25);\end{tikzpicture}}; 

    \node (V) at (3, -5.5) {\begin{tikzpicture}\draw (0,0) rectangle node{q} (0.75,0.75);\draw (0.75,0.75) rectangle node{r} (1.5,0);\draw (0.75,0) rectangle node{p} (1.5,-0.75);\end{tikzpicture}}; 
    \node (W) at (1, -5.5) {\begin{tikzpicture}\draw (0,0) rectangle node{q} (0.75,0.75);\draw (0.75,0.75) rectangle node{r} (0,1.5);\draw (0,1.5) rectangle node{p} (0.75,2.25);\end{tikzpicture}}; 
    \node (X) at (3, -9.25) {\begin{tikzpicture}\draw (0,0) rectangle node{r} (0.75,0.75);\draw (0.75,0.75) rectangle node{p} (1.5,0);\draw (0.75,0) rectangle node{q} (1.5,-0.75);\end{tikzpicture}}; 
    \node (Y) at (1, -9.25) {\begin{tikzpicture}\draw (0,0) rectangle node{r} (0.75,0.75);\draw (0.75,0.75) rectangle node{p} (0,1.5);\draw (0,1.5) rectangle node{q} (0.75,2.25);\end{tikzpicture}}; 
   
\end{scope}

\begin{scope}[>={Stealth[black]},
              every node/.style={fill=white},
              every edge/.style={draw=black}]
    \path [->] (A) edge[loop above]  node[fill=none]{} (A);
    \path [->] (A) edge node[above, fill=none]{$\scriptsize\begin{matrix}
0 \\
0 \\
1 \\
\end{matrix}$+}(B);
    \path [->] (A) edge node[above, fill=none]{$\scriptsize\begin{matrix}
1 \\
1 \\
0 \\
\end{matrix}$+}(E);
    \path [->] (B) edge node[left, fill=none]{$\scriptsize\begin{matrix}
1 \\
0 \\
4 \\
\end{matrix}$}(H);
    \path [->] (E) edge node[right, fill=none]{$\scriptsize\begin{matrix}
3 \\
4 \\
0 \\
\end{matrix}$}(K);
    \path [->] (C) edge node[below, yshift=-0.25cm, xshift=3.5mm, fill=none]{$\scriptsize\begin{matrix}
0 \\
1 \\
2 \\
\end{matrix}$+}(K); 
    \path [->] (F) edge node[below, yshift=-0.25cm, xshift=-3mm, fill=none]{$\scriptsize\begin{matrix}
2 \\
1 \\
0 \\
\end{matrix}$+}(H);

    \path [->] (H) edge node[fill=none]{}(I);
    \path [->] (I) edge node[fill=none]{}(J);
    \path [->] (J) edge node[fill=none]{}(H);
    \path [->] (K) edge node[fill=none]{}(L);
    \path [->] (L) edge node[fill=none]{}(M);
    \path [->] (M) edge node[fill=none]{}(K);
   
\end{scope}

\begin{scope}[>={Stealth[black]}, 
              every node/.style={fill=white},
              every edge/.style={draw=black}]

    \path [->] (C) edge node[fill=none]{}(D);
 \path [->] (B) edge node[fill=none]{}(C);
\path [->] (B) edge node[fill=none]{}(G);
 \path [->] (E) edge node[fill=none]{}(F);
\path [->] (C) edge[transform canvas={yshift=1mm}] node[fill=none]{} (F);
   \path [->] (F) edge[transform canvas={yshift=-1mm}] node[fill=none]{} (C);
 \path [->] (E) edge node[fill=none]{}(D);
 \path [->] (F) edge node[fill=none]{}(G);  
   \path [->] (G) edge node[fill=none]{}(E);
 \path [->] (D) edge node[fill=none]{}(B); 
\end{scope}

\begin{scope}[>={Stealth[blue]}, 
              every node/.style={fill=white},
              every edge/.style={draw=blue, thick}]

    \path [->] (B) edge node[left, fill=none]{$\scriptsize\begin{matrix}
1 \\
0 \\
3 \\
\end{matrix}$+}(N);
 \path [->] (N) edge node[fill=none]{}(O);
 \path [->] (O) edge[transform canvas={yshift=1mm}] node[fill=none]{}(Q);
 \path [->] (E) edge node[right, xshift=1.5mm, fill=none]{$\scriptsize\begin{matrix}
2 \\
3 \\
0 \\
\end{matrix}$+}(P);
 \path [->] (P) edge node[fill=none]{}(Q);
 \path [->] (Q) edge [transform canvas={yshift=-1mm}] node[fill=none]{}(O);
 \path [->] (J) edge node[fill=none]{}(N);
 \path [->] (M) edge node[fill=none]{}(P);
 \path [->] (H) edge node[fill=none]{}(R);
 \path [->] (R) edge node[fill=none]{}(S);
 \path [->] (S) edge[transform canvas={yshift=1mm}] node[fill=none]{}(W);
 \path [->] (K) edge node[fill=none]{}(V);
 \path [->] (V) edge node[fill=none]{}(W);
 \path [->] (W) edge[transform canvas={yshift=-1mm}] node[fill=none]{}(S);
 \path [->] (I) edge node[fill=none]{}(T);
 \path [->] (T) edge node[fill=none]{}(U);
 \path [->] (U) edge[transform canvas={yshift=1mm}] node[fill=none]{}(Y);
 \path [->] (L) edge node[fill=none]{}(X);
 \path [->] (X) edge node[fill=none]{}(Y);
 \path [->] (Y) edge[transform canvas={yshift=-1mm}] node[fill=none]{}(U);

 \path [->] (F) edge node[near end, above, yshift = 5mm, xshift=20mm, fill=none]{$\scriptsize\begin{matrix}
3 \\
2 \\
0 \\
\end{matrix}$+}(N);
 \path [->] (C) edge node[near end, above , yshift=5.75mm, xshift=-20mm, fill=none]{$\scriptsize\begin{matrix}
0 \\
1 \\
3 \\
\end{matrix}$+}(P);
\end{scope}
\end{tikzpicture}
\vspace{-15mm}
\caption{The graph governing equivalent radix expansions in base $-2+i$.}
\label{fig:radix2}
\end{figure}

\end{document}